\documentclass[12pt]{article}
\usepackage{amsfonts,amssymb,amsmath,amsthm}
\usepackage[russian,english]{babel}
\date{}

\theoremstyle{plain}
\newtheorem{theorem}{Theorem}
\newtheorem{proposition}{Proposition}
\newtheorem{lemma}{Lemma}
\newtheorem{corollary}{Corollary}
\theoremstyle{definition}
\newtheorem{definition}{Definition}
\numberwithin{equation}{section}
\numberwithin{theorem}{section}
\numberwithin{proposition}{section}
\numberwithin{lemma}{section}
\numberwithin{corollary}{section}
\numberwithin{definition}{section}

\newcommand{\R}{\mathbb{R}}
\newcommand{\N}{\mathbb{N}}
\newcommand{\Z}{\mathbb{Z}}
\newcommand{\D}{\mathcal{D}}
\newcommand{\T}{\mathbb{T}}
\newcommand{\esslim}{\operatornamewithlimits{ess\,lim}}
\newcommand{\esslimsup}{\operatornamewithlimits{ess\,limsup}}
\newcommand{\essinf}{\operatornamewithlimits{ess\,inf}}
\newcommand{\esssup}{\operatornamewithlimits{ess\,sup}}
\newcommand{\sign}{\operatorname{sign}}
\newcommand{\Co}{\operatorname{co}}
\newcommand{\<}{\langle}
\renewcommand{\>}{\rangle}
\newcommand{\MV}{\operatorname{MV}}
\newcommand{\const}{\mathrm{const}}

\newcommand{\supp}{\operatorname{supp}}
\title{On long time behavior of periodic entropy solutions of a degenerate non-linear parabolic equation}
\author{Evgeny Yu. Panov}
 \voffset -24.5mm
 \hoffset -15mm
 \textwidth 175mm
 \textheight 250mm

\begin{document}
\maketitle
\begin{abstract}
We prove the asymptotic convergence of a space-periodic entropy solution of a one-dimensional degenerate parabolic
equation to a traveling wave. It is also shown that on a segment containing the essential range of the limit profile the flux function is linear (with the slope equaled to the speed of the traveling wave) and the diffusion function is constant.
\end{abstract}
\maketitle

\section{Introduction}\label{sec1}
In the half-plane $\Pi=\R_+\times\R$, where $\R_+=(0,+\infty)$, we consider the nonlinear parabolic equation
\begin{equation}
\label{1}
u_t+\varphi(u)_x-g(u)_{xx}=0,
\end{equation}
where the functions $\varphi(u),g(u)\in C(\R)$, and $g(u)$ is non-strictly increasing. Since $g(u)$
may be constant on nontrivial intervals, (\ref{1}) is a degenerate parabolic equation.  In particular,
for $g(u)\equiv\const$ this equation reduces to the first-order conservation law
\begin{equation}
\label{cl}
u_t+\varphi(u)_x=0.
\end{equation}
We study the Cauchy problem for equation (\ref{1}) with initial condition
\begin{equation}\label{2}
u(0,x)=u_0(x)\in L^\infty(\R).
\end{equation}
We recall the notion of entropy solution of (\ref{1}), (\ref{2}) (see \cite{Car,MalT,AndMal}).

\begin{definition}\label{def1}
A bounded measurable function $u=u(t,x)\in L^\infty(\Pi)$ is called an entropy solution (e.s. for short) of (\ref{1}), (\ref{2}) if the generalized derivative $g(u)_x\in L^2_{loc}(\Pi)$ and for each $k\in\R$
\begin{equation}
\label{entr}
|u-k|_t+[\sign(u-k)(\varphi(u)-\varphi(k))]_x-|g(u)-g(k)|_{xx}\le 0
\end{equation}
in the sense of distributions on $\Pi$ (in $\D'(\Pi)$), and
\begin{equation}\label{3}
\esslim_{t\to 0} u(t,\cdot)=u_0 \ \mbox{ in } L^1_{loc}(\R).
\end{equation}
\end{definition}

Condition (\ref{entr}) means that for any non-negative test function $f=f(t,x)\in C_0^\infty(\Pi)$
\begin{equation}
\label{entr1}
\int_{\Pi}\{|u-k|f_t+[\sign(u-k)(\varphi(u)-\varphi(k))]f_x+|g(u)-g(k)|f_{xx}\}dtdx\ge 0.
\end{equation}
Taking in (\ref{entr}) $k=\pm M$, where $M\ge\|u\|_\infty$, we obtain that an e.s. satisfies equation (\ref{1}) in $\D'(\Pi)$, i.e. it is a weak solution of this equation.

Remark also that in the case of equation (\ref{cl}) condition (\ref{entr}) coincides with the known Kruzhkov entropy
condition \cite{Kr}. In the case of one space variable there always exists the unique e.s. of (\ref{1}), (\ref{2}), see
\cite{MalT}.

Now we suppose that the initial function $u_0(x)$ is periodic: $u_0(x+1)=u_0(x)$ a.e. in $\R$. Such a function may be considered as a function on a circle $\T=\R/\Z$
(we can identify $\T$ with a fundamental interval $[0,1)$~): $u_0(x)\in L^\infty(\T)$. Since $u(t,x+1)$ is an e.s. of the same problem (\ref{1}), (\ref{2}), then, by the uniqueness of e.s., $u(t,x+1)=u(t,x)$ a.e. in $\Pi$. This means that
$u(t,x)$ is a space periodic function, i.e. $u(t,x)\in L^\infty(\R_+\times\T)$. The main our result is the following asymptotic property.

\begin{theorem}\label{thM}
There is a periodic function $v(y)\in L^\infty(\T)$ (a profile) and a constant $c\in\R$ (a speed) such that
\begin{equation}\label{ass}
\esslim_{t\to+\infty} (u(t,x)-v(x-ct))=0 \ \mbox{ in } L^1(\T).
\end{equation}
Moreover, $\displaystyle\int_\T v(y)dy=I\doteq\int_\T u_0(x)dx=\int_0^1 u_0(x)dx$ and the functions $\varphi(u)-cu$, $g(u)$ are constant on
the minimal segment $[\alpha(v),\beta(v)]$ containing values $v(y)$ for almost all $y\in\T$ (i.e., $\alpha(v)=\essinf v(y)$, $\beta(v)=\esssup v(y)$).
\end{theorem}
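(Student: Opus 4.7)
The plan has three stages, mirroring the three claims of the theorem.

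\textbf{Stage 1 (compactness and $\omega$-limits).} Taking $k=\pm M$ with $M\ge\|u_0\|_\infty$ in (\ref{entr}) yields $\|u(t,\cdot)\|_\infty\le\|u_0\|_\infty$, and testing the weak form of (\ref{1}) with $f\equiv 1$ gives $\int_\T u(t,x)\,dx\equiv I$. A Kruzhkov doubling-of-variables argument applied to (\ref{entr1}) for two periodic e.s.\ yields the $L^1(\T)$-contraction; in particular $\|u(t,\cdot+h)-u(t,\cdot)\|_{L^1(\T)}$ is non-increasing in $t$ for each $h$, giving uniform (in $t$) translation equicontinuity. Combined with time equicontinuity obtained by testing (\ref{1}) against smooth functions, Fr\'echet--Kolmogorov makes the orbit $\{u(t,\cdot)\}_{t\ge 0}$ pre-compact in $L^1(\T)$, so the $\omega$-limit set $\Omega\subset L^1(\T)$ is non-empty, invariant under the entropy semigroup, and every $w\in\Omega$ has mean $I$.

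\textbf{Stage 2 (energy identity flattens $g$).} Formally multiplying (\ref{1}) by $g(u)$ and using periodicity to cancel the flux contribution $\int_\T g(u)\varphi(u)_x\,dx=0$ yields
\begin{equation*}
\frac{d}{dt}\int_\T G(u)\,dx+\int_\T|g(u)_x|^2\,dx=0,\qquad G(u):=\int_0^u g(s)\,ds,
\end{equation*}
with rigorous justification via truncations in (\ref{entr1}) using the regularity $g(u)_x\in L^2_{loc}$. Integrating over $(0,\infty)$ gives $\int_0^\infty\!\!\int_\T|g(u)_x|^2<\infty$, so along a sequence $t_n\to\infty$ for which both $u(t_n,\cdot)\to w$ in $L^1(\T)$ and $\int_{t_n}^{t_n+1}\!\int_\T|g(u)_x|^2\to 0$, the limit $w$ satisfies $g(w)_x=0$. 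Thus $g(w)=\const$ a.e., and monotonicity of $g$ forces $g\equiv\const$ on $[\alpha(w),\beta(w)]$.

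\textbf{Stage 3 (traveling-wave structure and full convergence).} Since $g$ is constant on the range of every $w\in\Omega$, the diffusion term drops out and $\Omega$ is invariant under the pure conservation-law semigroup of (\ref{cl}). Inserting Kruzhkov's entropy inequality with the moving Kruzhkov constant $k=w(x-c\cdot)$ and exploiting the preserved $L^1$-distance on $\Omega$, one sees that the contraction is strict unless $\varphi(u)-cu$ is affine on the attained range; this selects a single speed $c$ and forces $\varphi(u)-cu\equiv\const$ on $[\alpha(v),\beta(v)]$, collapsing $\Omega$ to the translation orbit $\{v(\cdot-a):a\in\T\}$. The function $v(x-ct)$ is then itself an e.s., so the non-increasing map $t\mapsto\|u(t,\cdot)-v(\cdot-ct)\|_{L^1(\T)}$ together with pre-compactness promotes subsequential convergence to (\ref{ass}); mean conservation gives $\int_\T v\,dy=I$.

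\textbf{Main obstacle.} The technical core is Stage 3: pinning down a single speed $c$ and showing that $\Omega$ is a single translation orbit rather than a higher-dimensional family of asymptotic profiles. The key mechanism is that strict inequality in the Kruzhkov doubling must occur unless $\varphi-cu$ is affine on the range, so the range-linearity of the flux is not a byproduct but the very device that selects the limit profile; making this rigidity precise, especially in degenerate configurations where $\alpha(v)=\beta(v)$, is where the bulk of the technical work lies.
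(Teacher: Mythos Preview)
Your Stage~3 is where the argument falls through, and your own ``Main obstacle'' paragraph essentially concedes this. The assertion that the Kruzhkov contraction is \emph{strict} unless $\varphi(u)-cu$ is affine on the attained range is not a lemma you have proved; it is the decay theorem for periodic entropy solutions of scalar conservation laws, which is exactly the content of \cite{PaMZM} and is established there by the same rescaling/compensated-compactness machinery used here, not by an $\omega$-limit rigidity argument. The phrase ``Kruzhkov's entropy inequality with the moving Kruzhkov constant $k=w(x-c\cdot)$'' is not meaningful as written (Kruzhkov's $k$ is a constant; doubling of variables compares two solutions, which only reproduces the $L^1$-contraction you already have). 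And ``preserved $L^1$-distance on $\Omega$'' does not by itself select a single speed $c$ or collapse $\Omega$ to one translation orbit: two distinct periodic profiles can maintain constant $L^1$-distance under a conservation-law flow without being translates of one another, so the LaSalle-type reasoning you sketch does not close.

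The paper sidesteps this entirely by a different mechanism. It applies the hyperbolic rescaling $u_n=u(nt,nx)$ and the parabolic rescaling $w_n=w(n^2t,nx)$, and invokes the compensated-compactness result Theorem~\ref{th1}: any Young-measure limit $\nu_{t,x}$ of a bounded sequence of entropy solutions must have $\varphi$ affine and $g$ constant on $\Co\supp\nu_{t,x}$. Since Lemma~\ref{lem2} forces $\int\lambda\,d\nu_{t,x}=I$, this yields $\nu_{t,x}=\delta_I$ unless $\varphi$ is affine (resp.\ $g$ constant) in a neighborhood of $I$, producing Propositions~\ref{pro1} and~\ref{pro2} and hence Theorem~\ref{th2} directly, with no rigidity analysis of $\Omega$. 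Only after the maximal interval $[a',b']$ has been identified does the contraction-plus-Cauchy-sequence endgame (your final step) apply, and it applies cleanly because on $[a',b']$ equation~(\ref{1}) is literally $u_t+cu_x=0$. Your Stages~1--2 are plausible in outline (though the energy identity $\frac{d}{dt}\int_\T G(u)\,dx\le -\int_\T|g(u)_x|^2\,dx$ needs genuine work when $\varphi,g$ are merely continuous: the products $g(u)u_t$ and $g(u)\varphi(u)_x$ are not a~priori well-defined), but without an independent argument for the conservation-law case, Stage~3 assumes what remains to be proved.
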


In the case of conservation laws (\ref{cl}) Theorem~\ref{thM} was established in \cite{PaMZM}.

Taking into account that in the case $\alpha(v)<\beta(v)$ the interval $(\alpha(v),\beta(v))$ contains $I$, we derive the following decay property.

\begin{corollary}\label{cor1}
Assume that for any $c\in\R$ the functions $\varphi(u)-cu$, $g(u)$ are not constant simultaneously in any vicinity of $I$. Then $v(y)\equiv I$, that is,
\begin{equation}\label{dec}
\esslim_{t\to+\infty} u(t,x)=I \ \mbox{ in } L^1(\T).
\end{equation}
\end{corollary}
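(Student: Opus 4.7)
The plan is to prove the contrapositive: if $v$ is \emph{not} a.e.\ equal to $I$, then there exist a real $c$ and a neighborhood of $I$ on which both $\varphi(u) - cu$ and $g(u)$ are constant, directly contradicting the hypothesis. Since Theorem~\ref{thM} guarantees $\int_\T v(y)\,dy = I$, the statement $v\not\equiv I$ a.e.\ is equivalent to the strict inequality $\alpha(v) < \beta(v)$, so I would assume the latter and aim for a contradiction.

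The only non-formal point is the elementary fact, already signaled in the paragraph preceding the corollary, that $\alpha(v) < \beta(v)$ forces $I$ to lie \emph{strictly inside} $(\alpha(v), \beta(v))$. I would verify this by choosing any $\epsilon \in (0, (\beta(v)-\alpha(v))/2)$: by definition of essential supremum, the set $E_\epsilon = \{y \in \T : v(y) \ge \beta(v) - \epsilon\}$ has positive measure $|E_\epsilon|$, and $v \ge \alpha(v)$ a.e.; splitting the integral $I = \int_\T v\,dy$ across $E_\epsilon$ and its complement then yields $I \ge \alpha(v) + |E_\epsilon|(\beta(v)-\alpha(v)-\epsilon) > \alpha(v)$, and the symmetric argument with $\{v \le \alpha(v)+\epsilon\}$ gives $I < \beta(v)$.

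Once this strict sandwiching is established, $[\alpha(v), \beta(v)]$ is a genuine closed neighborhood of $I$, and Theorem~\ref{thM} asserts that both $\varphi(u) - cu$ and $g(u)$ are constant on this segment for the speed $c$ supplied by the theorem. This contradicts the hypothesis of the corollary, forcing $\alpha(v) = \beta(v)$, so $v$ is essentially constant, and the value of this constant must be $I$. Substituting $v \equiv I$ into the asymptotic relation (\ref{ass}) immediately yields (\ref{dec}). I do not foresee any real obstacle here: the whole corollary is a short logical deduction from Theorem~\ref{thM} combined with the routine measure-theoretic observation that the mean of a non-essentially-constant bounded function on a probability space lies strictly between its essential infimum and essential supremum.
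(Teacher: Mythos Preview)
Your argument is correct and matches the paper's own justification, which consists solely of the one-line remark preceding the corollary that $I\in(\alpha(v),\beta(v))$ whenever $\alpha(v)<\beta(v)$, together with Theorem~\ref{thM}. You have simply written out in detail the elementary measure-theoretic step the paper leaves implicit.
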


\section{Preliminaries}
We denote $z^+=\max(0,z)$, $\sign^+ z=(\sign z)^+=\left\{\begin{array}{lr} 1, & z>0, \\ 0, & z\le 0.\end{array}\right.$

\begin{lemma}\label{lem1}
Let $u_1=u_1(t,x)$, $u_2=u_2(t,x)$ be e.s. of (\ref{1}), (\ref{2}) with periodic initial data $u_{01}(x),u_{02}(x)$. Then for a.e. $t,s>0$, $t>s$
\begin{eqnarray}\label{contr+}
\int_\T (u_1(t,x)-u_2(t,x))^+dx\le\int_\T (u_1(s,x)-u_2(s,x))^+dx\le \nonumber\\ \int_\T (u_{01}(x)-u_{02}(x))^+dx, \\
\label{cons}
\int_\T (u_1(t,x)-u_2(t,x))dx=\int_\T (u_{01}(x)-u_{02}(x))dx,
\end{eqnarray}
\end{lemma}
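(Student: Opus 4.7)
The plan is to derive the classical Kato inequality for $|u_1-u_2|$ by Carrillo's doubling of variables, integrate it over the period cell to obtain $L^1$ contraction for the absolute value, obtain the integral conservation from the weak form of the equation, and assemble these two facts through the identity $(u_1-u_2)^+=\tfrac12(|u_1-u_2|+(u_1-u_2))$.

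The first and principal step is to establish
\begin{equation}\label{kato}
|u_1-u_2|_t+[\sign(u_1-u_2)(\varphi(u_1)-\varphi(u_2))]_x\le|g(u_1)-g(u_2)|_{xx}
\end{equation}
in $\D'(\Pi)$. I would apply (\ref{entr1}) to $u_1(t,x)$ with $k=u_2(s,y)$ and to $u_2(s,y)$ with $k=u_1(t,x)$, test the sum against a non-negative $f(t,x,s,y)$ concentrated near the diagonal $\{t=s,\,x=y\}$, and let the mollification parameter tend to $0$. The delicate point is the degenerate diffusion term: Carrillo's observation is that $\sign(u_1-u_2)$ and $\sign(g(u_1)-g(u_2))$ agree wherever $g(u_1)\neq g(u_2)$ (because $g$ is non-decreasing), so the $L^2_{\mathrm{loc}}$ regularity $g(u_i)_x\in L^2_{\mathrm{loc}}(\Pi)$ built into Definition~\ref{def1} lets the diagonal limit close. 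This is essentially the identity underlying the uniqueness results of \cite{Car,MalT,AndMal}.

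With (\ref{kato}) in hand, I would test it against $\alpha(t)\eta_N(x)$, where $\alpha\in C_0^\infty(\R_+)$ is non-negative and $\eta_N\in C_0^\infty(\R)$ is a cutoff equal to $1$ on $[-N,N]$, supported in $[-N-1,N+1]$, with $\|\eta_N'\|_\infty+\|\eta_N''\|_\infty$ bounded uniformly in $N$. Since $|u_1-u_2|$ is $1$-periodic in $x$, one has $\int_\R|u_1-u_2|(t,x)\eta_N(x)\,dx=2N\,W(t)+O(1)$ uniformly in $t$, where $W(t):=\int_\T|u_1-u_2|(t,x)\,dx$, while the contributions from $\eta_N'$ and $\eta_N''$ stay bounded in $N$ because $\varphi,g$ are continuous and $u_1,u_2$ are essentially bounded. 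Dividing by $2N$ and letting $N\to\infty$ gives $\int_0^\infty W(t)\alpha'(t)\,dt\ge 0$ for every non-negative $\alpha$, so $W$ admits a non-increasing representative.

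Finally, because an e.s. is a weak solution of (\ref{1}) (as noted after Definition~\ref{def1}), the difference $w=u_1-u_2$ solves $w_t+(\varphi(u_1)-\varphi(u_2))_x-(g(u_1)-g(u_2))_{xx}=0$ in $\D'(\Pi)$; testing against the same $\alpha(t)\eta_N(x)/(2N)$ and letting $N\to\infty$ yields $\tfrac{d}{dt}\int_\T w\,dx=0$, and (\ref{3}) identifies the value of the constant, giving (\ref{cons}). Combining the conservation with the monotonicity of $W(t)$ via $(u_1-u_2)^+=\tfrac12(|u_1-u_2|+(u_1-u_2))$ yields the first inequality of (\ref{contr+}); the second inequality follows by selecting $s_n\to 0^+$ with $u_i(s_n,\cdot)\to u_{0i}$ in $L^1(\T)$ using (\ref{3}) and passing to the limit. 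The only serious obstacle is the doubling-of-variables derivation of (\ref{kato}) in the degenerate parabolic setting; everything after that is routine bookkeeping.
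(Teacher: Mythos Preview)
Your proposal is correct and follows the same overall scheme as the paper (Kato inequality $\to$ averaging over the period via a rescaled cutoff $\to$ monotonicity of the period integral, plus the weak form for conservation). There is one small structural difference worth noting: the paper invokes directly the \emph{one-sided} Kato inequality
\[
[(u_1-u_2)^+]_t+[\sign^+(u_1-u_2)(\varphi(u_1)-\varphi(u_2))]_x-[(g(u_1)-g(u_2))^+]_{xx}\le 0
\]
from \cite{Car} and obtains (\ref{contr+}) immediately, whereas you establish the two-sided inequality for $|u_1-u_2|$, prove conservation separately, and then reconstruct the one-sided estimate through $(u_1-u_2)^+=\tfrac12(|u_1-u_2|+(u_1-u_2))$. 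Both routes are standard and equivalent; the paper's is marginally shorter since the $(\cdot)^+$ version is exactly what Carrillo proves, while yours has the minor advantage of only needing the symmetric entropy inequality (\ref{entr}) already stated in Definition~\ref{def1}. Your flat cutoff $\eta_N$ divided by $2N$ plays the same role as the paper's scaled test function $r^{-1}\beta(x/r)$ together with relation~(\ref{lm}).
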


\begin{proof}
By the Kato inequality obtained in \cite{Car}
\begin{equation}\label{kato}
[(u_1-u_2)^+]_t+[\sign^+(u_1-u_2))(\varphi(u_1)-\varphi(u_2))]_x-[(g(u_1)-g(u_2))^+]_{xx}\le 0
\end{equation}
in $\D'(\Pi)$. Let $\alpha(t)\in C_0^\infty(\R_+)$, $\beta(y)\in C_0^\infty(\R)$ be nonnegative functions and $\int_\R \beta(y)dy=1$.
Applying (\ref{kato}) to the test function $\alpha(t)\beta(x/r)$, where $r>0$, we arrive at the inequality
\begin{eqnarray}\label{kato1}
\int_\Pi (u_1(t,x)-u_2(t,x))^+\alpha'(t)\beta(x/r)dtdx+ \nonumber\\
r^{-1}\int_{\Pi}\sign^+(u_1-u_2)(\varphi(u_1)-\varphi(u_2))\alpha(t)\beta'(x/r)dtdx+ \nonumber\\
r^{-2}\int_{\Pi}(g(u_1)-g(u_2))^+\alpha(t)\beta''(x/r)dtdx\ge 0.
\end{eqnarray}
As is rather well-known (see, for example, \cite[Lemma~2.1]{PaDU1}), for a bounded spatially periodic function $w(t,x)\in L^\infty(\Pi)$ the following relation holds
\begin{equation}\label{lm}
\lim_{r\to\infty}r^{-1}\int_{\Pi}w(t,x)a(t)b(x/r)dtdx=
C\int_0^{+\infty}\left(\int_\T w(t,x)dx\right) a(t)dt,
\end{equation}
where $a(t)\in C_0(\R_+)$, $b(y)\in C_0(\R)$, $\displaystyle C=\int_{\R} b(y)dy$.
Multiplying (\ref{kato1}) by $r^{-1}$ and passing to the limit as $r\to\infty$ with the help of (\ref{lm}), we arrive at
$$
\int_0^{+\infty} I(t)\alpha'(t)dt\ge 0
$$
for any $\alpha(t)\in C_0^\infty(\R_+)$, $\alpha(t)\ge 0$, where $\displaystyle I(t)=\int_\T(u_1(t,x)-u_2(t,x))^+dx$. This means that $I'(t)\le 0$ in $\D'(\R_+)$ and therefore,
for almost all $t,s$, $t>s>0$
$$
I(t)\le I(s)\le \esslim_{t\to 0} I(t)=I(0)\doteq\int_\T (u_{01}(x)-u_{02}(x))^+dx,
$$
and (\ref{contr+}) follows.

We use also that, by initial condition (\ref{3}),
$$
|I(t)-I(0)|\le\int_0^1 |u_1(t,x)-u_{01}(x)|dx+\int_0^1 |u_2(t,x)-u_{02}(x)|dx\to 0,
$$
as time $t\to 0$, running over a set of full Lebesgue measure.

To establish (\ref{cons}) we observe that
$$
(u_1-u_2)_t+(\varphi(u_1)-\varphi(u_2))_x-(g(u_1)-g(u_2))_{xx}=0 \ \mbox{ in } \D'(\Pi)
$$
since $u_1$, $u_2$ are weak solutions of (\ref{1}).
Applying this relation to the test function $r^{-1}\alpha(t)\beta(x/r)$ and passing to the limit as $r\to\infty$, we obtain the identity
$$
\int_0^\infty\left(\int_\T (u_1(t,x)-u_2(t,x))dx\right)\alpha'(t)dt=0
$$
$\forall \alpha(t)\in C_0^\infty(\R_+)$. This evidently implies (\ref{cons}).
The proof is complete.
\end{proof}

It readily follows from (\ref{contr+}) that $u_1(t,x)\le u_2(t,x)$ a.e. in $\Pi$ whenever $u_{01}\le u_{02}$ (comparison principle). Clearly, this implies the uniqueness of periodic e.s. Another direct consequence of Lemma~\ref{lem1} is the following contraction property in $L^1(\T)$: for a.e. $t,s>0$, $t>s$
\begin{equation}\label{contr}
\int_\T |u_1(t,x)-u_2(t,x)|dx\le\int_\T |u_1(s,x)-u_2(s,x)|dx\le\int_\T |u_{01}(x)-u_{02}(x)|dx.
\end{equation}
Indeed, (\ref{contr}) readily follows from (\ref{contr+}) and the identity
$|u_1-u_2|=(u_1-u_2)^++(u_2-u_1)^+$.

Taking in (\ref{cons}) $u_1=u$, $u_2=0$, we derive the following mass conservation
property
\begin{equation}\label{mass}
\int_\T u(t,x)dx=I=\int_\T u_0(x)dx.
\end{equation}

\medskip
We will need the notion of a \textit{measure-valued function}. Recall (see \cite{Di,Ta}) that a
measure-valued function on $\Pi$ is a weakly measurable map $(t,x)\mapsto \nu_{t,x}$ of $\Pi$ into the space $\operatorname{Prob}_0(\R)$ of probability Borel measures with compact support in~$\R$.

The weak measurability of $\nu_{t,x}$ means that for each continuous function $g(\lambda)$ the
function $\displaystyle (t,x)\to\<\nu_{t,x}(\lambda),g(\lambda)\>=\int g(\lambda)d\nu_{t,x}(\lambda)$ is measurable on~$\Pi$.

We say that a measure-valued function $\nu_{t,x}$ is {\it bounded\/} if there exists $R>0$ such that
$\supp\nu_{t,x}\subset [-R,R]$ for almost all $(t,x)\in\Pi$. We shall denote by
$\|\nu_{t,x}\|_\infty$ the smallest such~$R$.

Finally, we say that measure-valued functions of the kind
$\nu_{t,x}(\lambda)=\delta(\lambda-u(t,x))$, where $u(t,x)\in L^\infty(\Pi)$ and
$\delta(\lambda-u^*)$ is the Dirac measure at $u^*\in\R$, are {\it regular}. We identify these
measure-valued functions and the corresponding functions $u(t,x)$, so that there is a natural
embedding $L^\infty(\Pi)\subset\MV(\Pi)$, where $\MV(\Pi)$ is the set of bounded measure-valued
functions on~$\Pi$.

Measure-valued functions naturally arise as weak limits of bounded sequences in $L^\infty(\Pi)$ in
the sense of the following theorem by Tartar (see~\cite{Ta}).

\begin{theorem}\label{thT}
Let $u_m(t,x)\in L^\infty(\Pi)$, $m\in\N$, be a bounded sequence. Then there exist a subsequence
$u_n(t,x)$ and a measure-valued function $\nu_{t,x}\in\MV(\Pi)$ such that\begin{equation} \label{4}
\forall p(\lambda)\in C(\R) \quad p(u_n) \mathrel{\mathop{\rightharpoonup}_{n\to\infty}}
\<\nu_{t,x}(\lambda),p(\lambda)\> \quad\text{weakly-\/$*$ in } L^\infty(\Pi).
\end{equation}
Besides, $\nu_{t,x}$ is regular, i.e., $\nu_{t,x}(\lambda)=\delta(\lambda-u(t,x))$ if and only if
$u_n(t,x) \mathrel{\mathop{\to}\limits_{n\to\infty}} u(t,x)$ in $L^1_{loc}(\Pi)$.
\end{theorem}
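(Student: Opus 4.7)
The plan is to combine a Banach–Alaoglu diagonal extraction on a countable dense family of nonlinear test functions with the Riesz representation theorem applied pointwise a.e.\ to obtain the measure-valued limit, and then to use the quadratic test function $p(\lambda)=\lambda^2$ to prove the regularity characterization. Since $u_m$ is uniformly bounded, pick $R>0$ with $\|u_m\|_\infty\le R$ for every $m$, so it suffices to deal with $p\in C([-R,R])$, and the support of any limit measure will automatically lie in $[-R,R]$.

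First I would choose a countable set $\{p_k\}_{k\in\N}\subset C([-R,R])$ that is dense in the uniform norm (for instance, polynomials with rational coefficients). For each fixed $k$, the sequence $p_k(u_m)$ is bounded in $L^\infty(\Pi)$, and since $L^\infty(\Pi)=(L^1(\Pi))^*$ with $L^1(\Pi)$ separable, Banach–Alaoglu together with a standard diagonal procedure yields a single subsequence $u_n$ such that $p_k(u_n)\rightharpoonup \ell_k$ weakly-$*$ in $L^\infty(\Pi)$ for every $k$. Uniform density of $\{p_k\}$ in $C([-R,R])$ upgrades this to convergence $p(u_n)\rightharpoonup \ell_p$ for every $p\in C([-R,R])$, where $p\mapsto \ell_p$ is linear, positive (nonnegativity is preserved under weak-$*$ limits), satisfies $\ell_1\equiv 1$, and obeys $\|\ell_p\|_{L^\infty(\Pi)}\le \|p\|_{C([-R,R])}$.

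Next I would construct $\nu_{t,x}$ pointwise. Fix representatives of each $\ell_k$ and let $E\subset \Pi$ be the intersection of the countably many full-measure sets on which all the identities $\ell_{p_k}+\ell_{p_j}=\ell_{p_k+p_j}$, $\ell_{\lambda p_k}=\lambda\ell_{p_k}$ (for rational $\lambda$), $p_k\ge 0\Rightarrow \ell_k\ge 0$, and $\ell_1=1$ hold. For each $(t,x)\in E$ the map $p_k\mapsto \ell_k(t,x)$ is a bounded positive linear functional on a dense subspace of $C([-R,R])$ with norm $1$, hence extends uniquely to a positive linear functional $L_{t,x}$ on $C([-R,R])$ with $L_{t,x}(1)=1$. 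By the Riesz representation theorem this functional is represented by a unique Borel probability measure $\nu_{t,x}$ supported in $[-R,R]$; outside $E$ set $\nu_{t,x}$ equal to, say, $\delta_0$. Weak measurability of $(t,x)\mapsto\nu_{t,x}$ reduces, via uniform approximation by $p_k$, to measurability of each $\ell_k$, which is automatic. For $p\in C(\R)$ the pairing $\langle\nu_{t,x},p\rangle$ depends only on $p|_{[-R,R]}$, and the weak-$*$ identification $p(u_n)\rightharpoonup\langle\nu_{t,x},p\rangle$ follows because both sides agree as $L^\infty$-equivalence classes.

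Finally I would prove the regularity characterization. If $u_n\to u$ in $L^1_{\mathrm{loc}}(\Pi)$, then by uniform continuity of $p$ on $[-R,R]$ also $p(u_n)\to p(u)$ in $L^1_{\mathrm{loc}}$, forcing $\langle\nu_{t,x},p\rangle=p(u(t,x))$ a.e., so $\nu_{t,x}=\delta(\lambda-u(t,x))$. Conversely, if $\nu_{t,x}$ is this Dirac measure, then applying \eqref{4} with $p(\lambda)=\lambda$ and with $p(\lambda)=\lambda^2$ gives $u_n\rightharpoonup u$ and $u_n^2\rightharpoonup u^2$ weakly-$*$ in $L^\infty(\Pi)$. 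For any compact $K\subset\Pi$,
\[
\int_K (u_n-u)^2\,dt\,dx=\int_K u_n^2\,dt\,dx-2\int_K u\,u_n\,dt\,dx+\int_K u^2\,dt\,dx\longrightarrow 0,
\]
so $u_n\to u$ in $L^2_{\mathrm{loc}}(\Pi)$ and hence in $L^1_{\mathrm{loc}}(\Pi)$. The delicate point — and the main technical obstacle — is the pointwise construction of $\nu_{t,x}$: I have to be sure a single null set suffices for all $p\in C([-R,R])$ and that the resulting assignment is weakly measurable. The use of a countable dense family $\{p_k\}$ is exactly what resolves this, because uniform approximation preserves both the pointwise identities and measurability.
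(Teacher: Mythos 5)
Your argument is correct, but note that the paper itself offers no proof of this statement: Theorem~\ref{thT} is quoted as Tartar's theorem and attributed to \cite{Ta}, so there is no internal proof to compare against. What you have written is the classical self-contained construction of the Young measure, and it holds up. The diagonal extraction over a countable uniformly dense family $\{p_k\}$, the pointwise Riesz representation on a single full-measure set $E$, and the $p(\lambda)=\lambda,\lambda^2$ trick for the converse half of the regularity claim (using that $\int_K u\,u_n\to\int_K u^2$ because $u\,\mathbf{1}_K\in L^1(\Pi)$ and $u_n\rightharpoonup u$ weakly-$*$) are all exactly the standard route, and your forward implication via uniform continuity of $p$ on $[-R,R]$ is also fine. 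Two small points you correctly flag but should make sure to execute carefully if you write this out in full: the norm bound $|\ell_{p_k}(t,x)|\le\|p_k\|_{C([-R,R])}$ on $E$ should be derived from positivity applied to $q\pm p_k\ge 0$ for rational $q>\|p_k\|$, so that it is among the countably many conditions defining $E$; and the passage from the dense family to all of $C([-R,R])$ must be done simultaneously for the weak-$*$ limits (an $\varepsilon/3$ argument using $\|p(u_n)-p_k(u_n)\|_\infty\le\|p-p_k\|_{C([-R,R])}$) and for the pointwise functionals $L_{t,x}$. With those details in place the proof is complete; it simply supplies what the paper delegates to the literature.
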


Assume that $u_n$ is a bounded in $L^\infty(\Pi)$ sequence of e.s. to approximate
equations
\begin{equation}\label{1a}
u_t+\varphi_n(u)_x-(g_n(u))_{xx}=0,
\end{equation}
where $\varphi_n(u),g_n(u)\in C(\R)$, $g_n(u)$ are nondecreasing functions, and  the sequences $\varphi_n(u)\to\varphi(u)$, $g_n(u)\to g(u)$ as $n\to\infty$
uniformly on any segment. Suppose that the sequence $u_n$ converges as $n\to\infty$ to a measure valued function $\nu_{t,x}\in\MV(\Pi)$ in the sense of relation (\ref{4}). The following property was established in \cite[Theorem 3.5]{PaSIMA1}
by a new variant of compensated compactness theory developed in \cite{PaAIHP}.

\begin{theorem}\label{th1}
For a.e. $(t,x)\in\Pi$ the function $g(\lambda)$ is constant and the function $\varphi(\lambda)$
is affine on the convex hull $\Co\supp\nu_{t,x}$ of the closed support $\supp\nu_{t,x}$.
\end{theorem}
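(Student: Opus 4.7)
The strategy is to split the conclusion into two independent pieces. Constancy of $g$ on $\Co\supp\nu_{t,x}$ will come from strong $L^2_{loc}$-compactness of the sequence $g_n(u_n)$, while affineness of $\varphi$ on $\Co\supp\nu_{t,x}$ will come from a compensated-compactness/div-curl argument applied to the conservation-law part of (\ref{1a}), with the parabolic term rendered harmless by the same strong compactness.

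\emph{Step 1: energy bound and strong compactness of $g_n(u_n)$.} I would first derive, uniformly in $n$, the estimate
$$
\int_K |\partial_x g_n(u_n)|^2\,dt\,dx \le C_K
$$
on every compact $K\subset\Pi$, either by multiplying equation (\ref{1a}) by $g_n(u_n)\chi^2$ for a space-time cutoff $\chi$ and integrating by parts, or equivalently by testing the entropy inequality (\ref{entr}) for $u_n$ against a well-chosen family parametrized by $k$. Combining this $H^1_x$-control of $g_n(u_n)$ with the negative-norm bound on $\partial_t g_n(u_n)$ read from (\ref{1a}), an Aubin--Lions argument gives, along a subsequence, $g_n(u_n)\to\bar g$ strongly in $L^2_{loc}(\Pi)$, where necessarily $\bar g(t,x)=\<\nu_{t,x}(\lambda),g(\lambda)\>$ by Theorem~\ref{thT}. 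Strong convergence of $g_n(u_n)^2$ then gives $\<\nu_{t,x},g^2\>=\bar g^2=\<\nu_{t,x},g\>^2$, and the equality case of Cauchy--Schwarz forces $g(\lambda)$ to be $\nu_{t,x}$-almost surely constant. Since $g$ is continuous and nondecreasing, constancy on $\supp\nu_{t,x}$ upgrades to constancy on $\Co\supp\nu_{t,x}$.

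\emph{Step 2: affineness of $\varphi$ via compensated compactness.} For a convex $\eta\in C^2(\R)$, set $q_n$ and $\psi_n$ by $q_n'=\eta'\varphi_n'$, $\psi_n'=\eta'g_n'$. Integrating the Kruzhkov inequality (\ref{entr}) for $u_n$ against $\eta''\ge 0$ in the parameter $k$ yields
$$
\partial_t\eta(u_n)+\partial_x q_n(u_n)-\partial_{xx}\psi_n(u_n) = -\mu_n^\eta,
$$
with $\mu_n^\eta\ge 0$ a locally finite Radon measure, uniformly bounded. Because $\partial_x\psi_n(u_n)=\eta'(u_n)\partial_x g_n(u_n)$, the Step~1 bound places $\partial_{xx}\psi_n(u_n)$ in a bounded set of $H^{-1}_{loc}$, and Murat's lemma then promotes $\partial_t\eta(u_n)+\partial_x q_n(u_n)$ to an $H^{-1}_{loc}$-precompact sequence. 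Applying the div-curl lemma to two such pairs $(\eta_i,q_i)$ produces the Tartar commutation relation
$$
\<\nu_{t,x},\eta_1 q_2-\eta_2 q_1\> = \<\nu_{t,x},\eta_1\>\<\nu_{t,x},q_2\> - \<\nu_{t,x},\eta_2\>\<\nu_{t,x},q_1\>.
$$
With $\eta_2$ affine and $\eta_1$ ranging over a sufficiently rich convex family (the singular Kruzhkov entropies $\eta(u)=(u-k)^+$, legitimated by the variant of compensated compactness in \cite{PaAIHP}), this relation degenerates precisely when $\varphi$ coincides on $\Co\supp\nu_{t,x}$ with an affine function.

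The main obstacle is the compensated-compactness step: one must both justify using the singular Kruzhkov entropies inside the div-curl framework — which is exactly the refinement of \cite{PaAIHP} and is needed to obtain the structural statement about $\Co\supp\nu_{t,x}$ rather than a mere reduction to a Dirac mass — and ensure that the second-order term $\partial_{xx}\psi_n(u_n)$ contributes only to the $H^{-1}_{loc}$ side, so that the parabolic degeneracy does not pollute the Tartar relation. The uniform $L^2_{loc}$ bound on $\partial_x g_n(u_n)$ from Step~1 is what makes the latter work no matter how degenerate the $g_n$ are.
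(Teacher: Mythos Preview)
The paper does not prove Theorem~\ref{th1}; it is quoted from \cite[Theorem~3.5]{PaSIMA1}, with the only commentary being that the proof there relies on the variant of compensated compactness developed in \cite{PaAIHP}. So there is no in-paper argument to compare your proposal against.

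Your two-step plan is a reasonable classical reconstruction and is in the spirit of what the paper alludes to. One point deserves care: in Step~1 you say that a negative-norm bound on $\partial_t g_n(u_n)$ can be ``read from (\ref{1a})''. Equation (\ref{1a}) controls $\partial_t u_n$, not $\partial_t g_n(u_n)$, and since $g_n$ is only continuous and nondecreasing (not Lipschitz in general), no chain rule transfers the one to the other. To run Aubin--Lions (or Simon's lemma) on $g_n(u_n)$ you would instead establish uniform time-equicontinuity of $u_n$ in $L^1_{loc}$---via the $L^1$-contraction machinery or by testing (\ref{1a}) against smooth functions of $x$---and then pass this to $g_n(u_n)$; combined with the $H^1_x$ bound this yields strong compactness by a Fr\'echet--Kolmogorov argument. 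This is fixable, but not quite as immediate as your wording suggests. Step~2, by contrast, only needs the uniform $L^2_{loc}$ bound on $\partial_x g_n(u_n)$ (to place $\partial_{xx}\psi_n(u_n)$ in $H^{-1}_{loc}$), and that part of your outline is sound.

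For comparison, the proof in \cite{PaSIMA1} does not split into ``strong compactness of $g_n(u_n)$'' followed by ``div--curl on the hyperbolic part''. It uses the ultra-parabolic H-measure framework of \cite{PaAIHP}, which handles the first-order flux and the degenerate diffusion simultaneously through a single microlocal defect measure; the constancy of $g$ and the affineness of $\varphi$ on $\Co\supp\nu_{t,x}$ then fall out together from a localization principle for that measure. Your route is more elementary and quite transparent in one space dimension, at the cost of the extra work in Step~1; the H-measure route is heavier machinery but treats both conclusions uniformly and extends cleanly to several space variables.
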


\section{Main results}
Let $u(t,x)\in L^\infty(\R_+\times\T)$ be a bounded space-periodic function such that $\displaystyle\int_\T u(t,x)dx=I=\const$, and $\gamma_n$, $n\in\N$, be a positive sequence. We consider the sequence $u_n=u(\gamma_nt,nx)$ bounded in $L^\infty(\R_+\times\T)$.

\begin{lemma}\label{lem2}
The sequence $u_n\rightharpoonup I$ as $n\to\infty$ weakly-$*$ in $L^\infty(\Pi)$.
\end{lemma}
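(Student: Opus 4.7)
The plan is to exploit the density of tensor-product test functions and reduce the claim to a classical averaging estimate for rapidly oscillating periodic profiles. Since $(u_n)$ is uniformly bounded in $L^\infty(\Pi)$ by $\|u\|_\infty$, weak-$*$ convergence need only be checked on a dense subset of $L^1(\Pi)$, and I would use test functions of the form $\psi(t,x)=a(t)b(x)$ with $a\in C_0(\R_+)$, $b\in C_0(\R)$. By Fubini, the pairing becomes
$$
\int_0^\infty a(t)\,J_n(t)\,dt,\qquad J_n(t):=\int_\R b(x)u(\gamma_n t,nx)\,dx.
$$
Thus it suffices to show $J_n(t)\to I\int_\R b(x)dx$ for a.e. $t$, together with a uniform bound on $J_n(t)$ so that dominated convergence applies to the outer integral.

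For the inner convergence, I fix $t$ and observe that $w_n(y):=u(\gamma_n t,y)$ is $1$-periodic, satisfies $\|w_n\|_\infty\le\|u\|_\infty$, and, by hypothesis, $\int_\T w_n(y)dy=I$ for every $n$. The claim then reduces to the following mean-value statement: if $(w_n)\subset L^\infty(\T)$ is uniformly bounded with $\int_\T w_n=I$, then $\int_\R b(x)w_n(nx)dx\to I\int_\R b(x)dx$ for every $b\in C_0(\R)$. I would prove this first for $b=\chi_{[\alpha,\beta]}$: the change of variable $y=nx$ and $1$-periodicity give
$$
\int_\alpha^\beta w_n(nx)\,dx=\frac{1}{n}\int_{n\alpha}^{n\beta}w_n(y)\,dy=I(\beta-\alpha)+O(1/n),
$$
where the remainder arises from the at-most-one incomplete period at each endpoint and is bounded by $2\|u\|_\infty/n$ uniformly in $n$ and in the choice of $w_n$. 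Linearity extends this to simple functions and $L^1$-density to all $b\in C_0(\R)$.

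Combining the pointwise-in-$t$ convergence $J_n(t)\to I\int_\R b\,dx$ with the uniform bound $|J_n(t)|\le\|u\|_\infty\|b\|_{L^1(\R)}$, dominated convergence yields $\int a(t)J_n(t)dt\to I\int a\,dt\int b\,dx$, proving weak-$*$ convergence against tensor test functions and hence against all of $L^1(\Pi)$. No serious obstacle arises: the argument is essentially the standard fact that rapid spatial oscillations of a mean-$I$ periodic profile converge weak-$*$ to $I$. The only subtlety is that the profile $w_n$ depends on $n$ (through $\gamma_n t$), but the mean condition $\int_\T w_n=I$ is imposed uniformly in $n$, which is precisely what makes the error estimate $O(1/n)$ independent of the specific profile.
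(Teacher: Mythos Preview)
Your argument is correct. The key observation---that the profile $w_n=u(\gamma_n t,\cdot)$ has mean $I$ uniformly in $n$, so the incomplete-period error in $\frac1n\int_{n\alpha}^{n\beta}w_n$ is $O(1/n)$ independently of $n$---is exactly what is needed, and the passage to general $b\in C_0(\R)$ and then to general $L^1(\Pi)$ test functions by density and dominated convergence is routine. One small remark: you should note that the pointwise convergence $J_n(t)\to I\int b$ holds for a.e.\ $t$ because the set of $t$ for which $\gamma_n t$ lies in the full-measure ``good'' set (where $\int_\T u(\cdot,x)dx=I$) for every $n$ is itself of full measure, being a countable intersection.

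The paper takes a different, Fourier-analytic route. It tests $u_n$ against $a(t)e^{2\pi ikx}$ on $\R_+\times\T$ and observes, via the translation invariance $x\mapsto x+1/n$, that the $k$-th Fourier coefficient of $u_n(t,\cdot)$ satisfies $c_k=e^{-2\pi ik/n}c_k$, forcing $c_k=0$ exactly for $0<|k|<n$; the $k=0$ mode equals $I$ by hypothesis. Density of trigonometric polynomials in $L^1(\R_+\times\T)$ then gives weak-$*$ convergence on the torus, and periodicity lifts this to $\Pi$. Compared with your approach, the paper's argument is shorter and yields exact vanishing of low Fourier modes rather than an $O(1/n)$ estimate, but it relies on the group structure of $\T$. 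Your real-variable averaging argument is more elementary, works directly on $\Pi$ without the final periodicity lift, and would transfer more readily to settings (e.g., almost-periodic or non-abelian) where the Fourier trick is unavailable.
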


\begin{proof}
We consider the complex-valued trigonometric functions $e^{2\pi i kx}$, $k\in\Z$, $i^2=-1$. By translation invariance of the measure $dx$ on the circle $\T$
\begin{eqnarray*}
\int_\T u(\gamma_nt,nx)e^{2\pi ikx}dx=e^{-2\pi ik/n}\int_T u(\gamma_nt,nx+1)e^{2\pi ik(x+1/n)}dx=\\ e^{-2\pi ik/n}\int_T u(\gamma_nt,nx)e^{2\pi ik x}dx.
\end{eqnarray*}
This implies that for $n>|k|>0$
$$
\int_\T u(\gamma_nt,nx)e^{2\pi ikx}dx=0=I\int_\T e^{2\pi ikx}dx
$$
while for $k=0$
$$
\int_\T u(\gamma_nt,nx)dx=\int_\T u(\gamma_nt,y)dy=I=I\int_\T dx.
$$
This equalities imply that
$$
\lim_{n\to\infty} \int_{\R_+\times\T}u_n(t,x)f(t,x)dtdx=\int_{\R_+\times\T} If(t,x)dtdx
$$
for test functions $f(t,x)=a(t)e^{2\pi ikx}$, where $a(t)\in L^1(\R_+)$, $k\in\Z$.
Since linear combinations of such functions are dense in $L^1(\R_+\times\T)$,
we conclude that $u_n\rightharpoonup I$ as $n\to\infty$ weakly-$*$ in $L^\infty(\R_+\times\T)$ and, in view of the periodicity, also weakly-$*$ in $L^\infty(\Pi)$. The proof is complete.
\end{proof}

Now, let $u(t,x)$ be an e.s. of (\ref{1}), (\ref{2}) with a periodic initial function $u_0(x)\in L^\infty(\T)$, and $M=\|u\|_\infty$, $\displaystyle I=\int_\T u_0(x)dx$.

\begin{proposition}\label{pro1}
Assume that the function $\varphi(u)$ is not affine in any vicinity of $I$. Then
the decay relation (\ref{dec}) holds.
\end{proposition}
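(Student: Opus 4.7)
The plan is to use the hyperbolic self-similar rescaling $v_n(t,x)=u(nt,nx)$, combined with the compensated compactness result Theorem~\ref{th1} and the weak-$*$ decay of Lemma~\ref{lem2}, to first obtain strong $L^1_{loc}$ convergence $v_n\to I$ and then, via the $L^1$-contraction from Lemma~\ref{lem1}, upgrade this to the desired long-time decay.

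First I would verify by a direct change of variables in (\ref{entr}) that $v_n$ is an e.s.\ of the rescaled equation $v_t+\varphi(v)_x-n^{-1}g(v)_{xx}=0$. This has the form (\ref{1a}) with $\varphi_n\equiv\varphi$ and $g_n=n^{-1}g\to 0$ uniformly on bounded sets, so the limit diffusion is the constant $0$. By Lemma~\ref{lem2} applied with $\gamma_n=n$, $v_n\rightharpoonup I$ weakly-$*$ in $L^\infty(\Pi)$. Passing to a subsequence via Theorem~\ref{thT} yields $v_n\to\nu_{t,x}\in\MV(\Pi)$ in the sense of (\ref{4}); comparison with the weak-$*$ limit $I$ forces $\<\nu_{t,x}(\lambda),\lambda\>=I$ a.e. Now I apply Theorem~\ref{th1} to this sequence: for a.e.\ $(t,x)$, the function $\varphi(\lambda)$ is affine on $\Co\supp\nu_{t,x}$ (the zero limit of $g_n$ is trivially constant). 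If this convex hull were a nondegenerate interval $[a,b]$, the mean condition $a\le I\le b$ gives two possibilities. Either $I\in(a,b)$, which means $\varphi$ is affine on the vicinity $(a,b)$ of $I$, contradicting the hypothesis; or $I$ is an endpoint, in which case integrating the nonnegative (resp.\ nonpositive) function $\lambda-I$ against $\nu_{t,x}$ forces $\nu_{t,x}=\delta(\lambda-I)$, so $\supp\nu_{t,x}=\{I\}$, again a contradiction. Hence $\nu_{t,x}=\delta(\lambda-I)$ a.e., and by the regularity criterion in Theorem~\ref{thT}, $v_n\to I$ in $L^1_{loc}(\Pi)$.

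To finish, denote $J(s)\doteq\int_\T |u(s,x)-I|\,dx$. The $1$-periodicity of $u(s,\cdot)$ and a change of variable $y=nx$ give $\int_\T|u(nt,nx)-I|\,dx=J(nt)$, so the $L^1_{loc}$ convergence of $v_n$ integrated on any strip $[a,b]\times\T$ yields $n^{-1}\int_{na}^{nb} J(s)\,ds\to 0$ for every $0<a<b$. Lemma~\ref{lem1} applied with $u_2\equiv I$ (the constant $I$ is a stationary e.s.\ of (\ref{1})) shows that $J(s)$ is non-increasing, whence
$$(b-a)\,J(nb)\le\frac{1}{n}\int_{na}^{nb} J(s)\,ds\to 0,$$
so $J(t)\to 0$ as $t\to+\infty$, which is exactly (\ref{dec}). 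The main difficulty is extracting strong (not merely weak-$*$) convergence of the rescaled orbit from only an $L^\infty$ bound; this is precisely what Theorem~\ref{th1} delivers, once one observes that the hypothesis on $\varphi$ near $I$ precludes any nondegenerate affine plateau in the support of the Young measure consistent with its mean being $I$.
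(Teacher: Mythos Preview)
Your argument is correct and follows essentially the same route as the paper: the hyperbolic rescaling $u(nt,nx)$, Lemma~\ref{lem2} to identify the mean of the Young measure, Theorem~\ref{th1} to force $\nu_{t,x}=\delta_I$, and then the $L^1$-contraction against the constant $I$. The only difference is in the last step: the paper selects a single time $t_0$ with $n_kt_0$ in a set of Lebesgue points and applies contraction from there, whereas your averaging-plus-monotonicity argument $(b-a)J(n_kb)\le n_k^{-1}\int_{n_ka}^{n_kb}J(s)\,ds\to 0$ is a cleaner way to pass from $L^1_{loc}$ convergence of the rescaled sequence to decay of $J$, avoiding the Lebesgue-point bookkeeping.
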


\begin{proof}
We consider the sequence $u_n=u(nt,nx)$. As is easy to see, $u_n$ is an e.s. of the equation
$$
u_t-\varphi(u)_x-\frac{1}{n}g(u)_{xx}=0
$$
for each $n\in\N$. Notice that $g_n(u)=\frac{1}{n}g(u)\to 0$ uniformly on any segment. Extracting a subsequence if necessary, we can assume that the sequence $u_n$ converges as $n\to\infty$ to a measure valued limit function $\nu_{t,x}\in\MV(\Pi)$ in the sense of (\ref{4}). In view of (\ref{mass}) the sequence $u_n$ satisfies the assumptions of Lemma~\ref{lem2}. By this lemma
$\displaystyle u_n\rightharpoonup I=\int_\T u_0(x)dx$ as $n\to\infty$ weakly-$*$ in $L^\infty(\Pi)$. From (\ref{4}) it follows that for a.e. $(t,x)\in\Pi$
\begin{equation}\label{5}
I=\int\lambda d\nu_{t,x}(\lambda).
\end{equation}
By Theorem~\ref{th1} the flux function $\varphi(u)$ is affine on
$[a(t,x),b(t,x)]=\Co\supp\nu_{t,x}$ for a.e. $(t,x)\in\Pi$. If $a(t,x)<b(t,x)$ then it follows from (\ref{5}) that $(a(t,x),b(t,x))$ is a neighborhood of $I$ and by our assumption $\varphi(u)$ is not affine on $[a(t,x),b(t,x)]$. Therefore, $a(t,x)=b(t,x)=I$ for a.e. $(t,x)\in\Pi$. This means that $\nu_{t,x}(\lambda)=\delta(\lambda-I)$. By Theorem~\ref{thT}
$u_n\to I$ in $L^1_{loc}(\Pi)$. Passing to a subsequence $u_{n_k}$, we can suppose that $u_{n_k}=u(n_kt,n_kx)\to I$ in $L^1(\T)$ for a.e. $t>0$. It is possible to choose such $t=t_0$ with additional property $n_kt_0\in E$, where
$E$ is the set of common Lebesgue points of the functions $\displaystyle\int_\T p(u(t,x))f(x)dx$, $p(\lambda)\in C([-M,M])$, $f(x)\in L^1(\T)$ (since the spaces $C([-M,M])$, $L^1(\T)$ are separable, this set has full measure). By the choice of $t_0$ we have
$$
J_k\doteq\int_{\T}|u(n_kt_0,y)-I|dy=\int_{\T}|u(n_kt_0,n_k x)-I|dx\mathop{\to}_{k\to\infty} 0.
$$
Further, applying contraction property (\ref{contr}) for $u_1=u$, $u_2=I$, we obtain that for a.e. $t>kt_0$
$$
\int_{\T}|u(t,x)-I|dy\le\int_{\T}|u(n_kt_0,x)-I|dy=J_k
$$
where we also use the fact that $n_kt_0\in E$.
Since $J_k\to 0$ as $k\to\infty$, the above estimate implies that
$$
\esslim_{t\to+\infty} \int_{\T}|u(t,x)-I|dy=0,
$$
and (\ref{dec}) follows.
\end{proof}

Let $[a,b]$ be the maximal segment such that $-M\le a<I<b\le M$, and that $\varphi(u)$ is affine on $[a,b]$. If such segments do not exist, i.e., $\varphi(u)$ is not affine in any vicinity of $I$, we set $a=b=I$.
Let $s_{a,b}(u)=\min(b,\max(a,u))$ be the cut-off function.

\begin{corollary}\label{cor2}
Under the above notations
$$
\esslim_{t\to+\infty}\int_\T |u(t,x)-s_{a,b}(u(t,x))|dx=0.
$$
\end{corollary}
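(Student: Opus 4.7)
The plan is to combine a monotonicity argument for the ``tail deficit''
\[
F(t):=\int_\T|u(t,x)-s_{a,b}(u(t,x))|\,dx=\int_\T\bigl((u(t,x)-b)^++(a-u(t,x))^+\bigr)\,dx
\]
with the scaling-limit analysis already carried out in the proof of Proposition~\ref{pro1}. First I would note that, being constant, the functions $a$ and $b$ are trivially e.s.\ of (\ref{1}); applying Lemma~\ref{lem1} to $u_1=u$, $u_2\equiv b$ and then to $u_1\equiv a$, $u_2=u$ shows that both $\int_\T(u(t,x)-b)^+\,dx$ and $\int_\T(a-u(t,x))^+\,dx$ are essentially non-increasing in $t$. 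Hence $F(t)$ is essentially non-increasing and admits an essential limit $L:=\esslim_{t\to+\infty}F(t)\ge 0$; the goal becomes to show $L=0$.

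Next, I would rescale as in Proposition~\ref{pro1}: set $u_n(t,x)=u(nt,nx)$ and extract a subsequence $u_{n_k}$ converging to some $\nu_{t,x}\in\MV(\Pi)$ in the sense of (\ref{4}). Lemma~\ref{lem2} gives $\int\lambda\,d\nu_{t,x}(\lambda)=I$ for a.e.\ $(t,x)\in\Pi$, while Theorem~\ref{th1} (applied with $g_n=g/n\to 0$) says that $\varphi$ is affine on $[\alpha_*(t,x),\beta_*(t,x)]:=\Co\supp\nu_{t,x}$. Since this segment contains $I$ and lies in $[-M,M]$, the maximality defining $[a,b]$ forces $[\alpha_*(t,x),\beta_*(t,x)]\subseteq[a,b]$ a.e.: otherwise the affine pieces of $\varphi$ on $[a,b]$ and on $[\alpha_*,\beta_*]$, which overlap in a neighborhood of $I$ and hence coincide there, would extend each other to an affine piece on a strictly larger subsegment of $[-M,M]$ containing $I$, contradicting maximality.

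The function $p(\lambda):=|\lambda-s_{a,b}(\lambda)|=(\lambda-b)^++(a-\lambda)^+$ therefore vanishes on $[a,b]\supseteq\supp\nu_{t,x}$, so $\<\nu_{t,x}(\lambda),p(\lambda)\>\equiv 0$; by (\ref{4}), $p(u_{n_k})\rightharpoonup 0$ weakly-$*$ in $L^\infty(\Pi)$. Testing this convergence against $\alpha(t)\chi_{[0,1]}(x)$ for an arbitrary nonnegative $\alpha\in L^1(\R_+)$ and changing variable $y=n_kx$ via the $1$-periodicity of $u$ in $x$ rewrites the inner integral as $F(n_kt)$, yielding
\[
\int_0^{+\infty}\alpha(t)F(n_kt)\,dt\mathop{\longrightarrow}_{k\to\infty}0.
\]
Since $F$ is bounded and essentially non-increasing with essential limit $L$, one has $F(n_kt)\to L$ for a.e.\ $t>0$; bounded convergence then forces $L\int_0^{+\infty}\alpha(t)\,dt=0$ for every admissible $\alpha$, hence $L=0$, as required. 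The main obstacle is the inclusion $\supp\nu_{t,x}\subseteq[a,b]$: it is precisely here that one must exploit the barycenter identity from Lemma~\ref{lem2} together with the maximality defining $[a,b]$, rather than the bare affinity supplied by Theorem~\ref{th1}.
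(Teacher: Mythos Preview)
Your argument is correct, but it proceeds quite differently from the paper's proof.

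The paper does not re-open the measure-valued machinery. Instead it applies Proposition~\ref{pro1} \emph{as a black box} to auxiliary solutions: if $b<M$ then, by maximality of $[a,b]$, $\varphi$ is not affine in any neighbourhood of $b$; letting $v$ be the e.s.\ with initial datum $v_0=u_0+(b-I)\ge u_0$, one has $\int_\T v_0=b$, so Proposition~\ref{pro1} gives $v(t,\cdot)\to b$ in $L^1(\T)$, and the comparison $u\le v$ yields $\int_\T(u-b)^+\,dx\le\int_\T|v-b|\,dx\to 0$. A symmetric comparison with $w_0=u_0+(a-I)$ handles the lower cutoff. Your route, by contrast, goes back inside the proof of Proposition~\ref{pro1}: you rerun the scaling $u_n(t,x)=u(nt,nx)$, identify $\Co\supp\nu_{t,x}$, argue from maximality that it is contained in $[a,b]$, and then exploit the monotonicity of $F(t)$ (which you obtain from Lemma~\ref{lem1} with constant comparison solutions) to upgrade weak-$*$ convergence of $p(u_{n_k})$ to the essential-limit statement.

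Both arguments are valid. The paper's is shorter and more modular---the same comparison scheme is reused verbatim for Corollary~\ref{cor3}, with Proposition~\ref{pro2} in place of Proposition~\ref{pro1}. Your approach avoids introducing auxiliary shifted solutions and gives, as a by-product, the sharper statement $\supp\nu_{t,x}\subset[a,b]$ a.e., at the cost of repeating the compactness/Young-measure analysis.
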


\begin{proof}
If $b<M$ then $\varphi(u)$ is not affine in any vicinity of $b$ (otherwise, the segment $[a,b]$ can be enlarged). Let $v(t,x)$ be an e.s. of (\ref{1}), (\ref{2}) with initial function $v_0(x)=u_0(x)+b-I\ge u_0(x)$. By the comparison principle $v(t,x)\ge u(t,x)$. Further, $\displaystyle\int_{\T} v_0(x)dx=b$ and by Proposition~\ref{pro1}
$$
\esslim_{t\to+\infty}\int_{\T} |v(t,x)-b|dx=0.
$$
From this relation and the inequality
$$
\int_\T (u(t,x)-b)^+dx\le\int_\T (v(t,x)-b)^+dx\le\int_\T |v(t,x)-b|dx
$$
it follows that
\begin{equation}\label{6}
\esslim_{t\to+\infty} \int_\T (u(t,x)-b)^+dx=0.
\end{equation}
Analogously, in the case $a>-M$, $u(t,x)\ge w(t,x)$, where $w(t,x)$ is an e.s. of (\ref{1}), (\ref{2}) with initial function $w_0(x)=u_0(x)+a-I\le u_0(x)$. By Proposition~\ref{pro1} the e.s. $w(t,x)$ decays as $t\to+\infty$ to the constant $a$. This implies the limit relations
\begin{equation}\label{7}
\esslim_{t\to+\infty} \int_\T (a-u(t,x))^+dx=0.
\end{equation}
For $b=M$, $a=-M$ relations (\ref{6}), (\ref{7}) are evident.
Since $|u-s_{a,b}(u)|=(u-b)^++(a-u)^+$, the desired statement readily follows from
(\ref{6}), (\ref{7}).
\end{proof}

Observe, that in the case $a=b=I$ Corollary~\ref{cor2} reduces to Proposition~\ref{pro1}. 

\begin{lemma}\label{lem3}
Assume that $u(x),v(x)\in L^\infty(\T)$, $a\le v(x)\le b$, $\displaystyle\int_\T u(x)dx=I\in [a,b]$. Then there exists a function $w(x)\in L^\infty(\T)$ such that $a\le w(x)\le b$, $\displaystyle\int_\T w(x)dx=I$, and
$$
\|u-w\|_{L^1(\T)}\le 2\|u-v\|_{L^1(\T)}.$$
\end{lemma}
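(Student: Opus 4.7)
My plan is to construct $w$ explicitly from the cut-off of $u$ and then adjust its mass.

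\medskip

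\textbf{Step 1: Cut-off.} Set $\tilde u(x)=s_{a,b}(u(x))$. Then $a\le\tilde u\le b$ a.e. and
$$|u-\tilde u|=(u-b)^++(a-u)^+.$$
Since $a\le v(x)\le b$, we have $(u-v)^+\ge(u-b)^+$ and $(v-u)^+\ge(a-u)^+$ pointwise, so
$$|u-\tilde u|\le|u-v|\quad\text{a.e., hence}\quad\|u-\tilde u\|_{L^1(\T)}\le\|u-v\|_{L^1(\T)}.$$

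\medskip

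\textbf{Step 2: Mass correction.} Put $J=\int_\T\tilde u(x)dx\in[a,b]$. If $J=I$ take $w=\tilde u$ and we are done (even with constant $1$ in place of $2$). Otherwise, say $J<I\le b$ (the case $J>I$ is symmetric). Consider the family
$$w_t(x)=\min(\tilde u(x)+t,b),\quad t\in[0,b-a].$$
Each $w_t$ is measurable with $a\le w_t\le b$; the map $t\mapsto\int_\T w_t dx$ is continuous, non-decreasing, equals $J$ at $t=0$ and equals $b$ at $t=b-a$. Since $J\le I\le b$, there exists $t^*$ with $\int_\T w_{t^*}dx=I$. Set $w=w_{t^*}$.

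\medskip

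\textbf{Step 3: The key bound.} Since $w\ge\tilde u$ pointwise in this case,
$$\|w-\tilde u\|_{L^1(\T)}=\int_\T(w-\tilde u)dx=I-J.$$
Now observe
$$I-J=\int_\T(u-\tilde u)dx=\int_\T(u-b)^+dx-\int_\T(a-u)^+dx,$$
so $|I-J|\le\int_\T(u-b)^+dx+\int_\T(a-u)^+dx=\|u-\tilde u\|_{L^1(\T)}$. Combining with Step 1,
$$\|u-w\|_{L^1(\T)}\le\|u-\tilde u\|_{L^1(\T)}+\|w-\tilde u\|_{L^1(\T)}\le 2\|u-\tilde u\|_{L^1(\T)}\le 2\|u-v\|_{L^1(\T)}.$$
The case $J>I$ is handled identically with $w_t(x)=\max(\tilde u(x)-t,a)$.

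\medskip

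\textbf{Main obstacle.} The cut-off $\tilde u$ already satisfies the desired $L^1$ bound (with constant $1$), but its mean is generally not $I$; the point is that restoring the mean to $I$ by a monotone truncation from above or below costs at most $|I-J|$ in $L^1$, and this defect is itself controlled by $\|u-\tilde u\|_{L^1}$ via the identity in Step 3, which is what produces the factor $2$.
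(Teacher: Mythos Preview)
Your argument is correct. It is, however, genuinely different from the paper's construction. The paper starts from the given $v$ and corrects its mean by a convex interpolation with an endpoint: if $I'=\int_\T v>I$ it sets $w=sa+(1-s)v$ with $s=(I'-I)/(I'-a)$ (and symmetrically with $b$ when $I'<I$), then estimates in one line $\|u-w\|_1\le\|u-v\|_1+s\|v-a\|_1=\varepsilon+(I'-I)\le 2\varepsilon$. You instead discard $v$ after Step~1, start from the cut-off $\tilde u=s_{a,b}(u)$, and restore the mean by a shift-and-truncate family $\min(\tilde u+t,b)$.

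The paper's route is shorter and gives an explicit closed-form $w$. Your route yields a $w$ that depends only on $u,a,b,I$ and not on the particular comparison function $v$; in fact your chain of inequalities proves the slightly sharper bound $\|u-w\|_{L^1(\T)}\le 2\|u-s_{a,b}(u)\|_{L^1(\T)}$, where the right-hand side equals $\inf\{\|u-v\|_{L^1(\T)}:a\le v\le b\}$. Both proofs hinge on the same idea that the mass defect is controlled by the $L^1$-distance, which is what produces the factor~$2$.
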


\begin{proof}
We denote $\displaystyle I'=\int_\T v(x)dx$, $\varepsilon=\|u-v\|_{L^1(\T)}$. It is clear that $|I'-I|\le\varepsilon$. If $I'=I$, there is nothing to prove, we can take $w=v$.
Assume that $I'>I$. In this case we seek the function
$w(x)$ in the form $w=sa+(1-s)v$, where the parameter $s\in (0,1)$ is defined by
the requirement $\displaystyle\int_\T w(x)dx=I$, which reduces to the equality $sa+(1-s)I'=I$.
Solving the corresponding equation, we find $s=\frac{I'-I}{I'-a}$. By the construction
\begin{eqnarray*}
\|u-w\|_1=\|u-v+s(v-a)\|_1\le \|u-v\|_1+s\|v-a\|_1=\\ \|u-v\|_1+s(I'-a)=\varepsilon+I'-I\le 2\varepsilon,
\end{eqnarray*}
where we denote $\|\cdot\|_1=\|\cdot\|_{L^1(\T)}$.

In the case $I'<I$ the proof is similar. In this case we take $w=sb+(1-s)v$, where
$s=\frac{I-I'}{b-I'}$.
\end{proof}

By the construction $\varphi(u)-cu$ is constant on $[a,b]$ for some $c\in\R$.
Assume that $v_0(x)\in L^\infty(\T)$, $a\le v_0(x)\le b$, and that $v(t,x)$ is an e.s. of (\ref{1}), (\ref{2}) with initial data $v_0(x)$. By the comparison principle
$a\le v(t,x)\le b$, and equation (\ref{1}) in such class of e.s. reduces to the form $v_t+cv_x-g(v)_{xx}=0$.

\begin{proposition}\label{pro2}
Assume that the function $g(u)$ is not constant in any vicinity of $\displaystyle I=\int_\T v_0(x)dx$. Then the decay property holds
$$
\esslim_{t\to+\infty} v(t,x)=I \ \mbox{ in } L^1(\T).
$$
\end{proposition}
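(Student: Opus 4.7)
The plan is to mimic the proof of Proposition~\ref{pro1}, replacing the hyperbolic rescaling $u(nt,nx)$ by a parabolic one so as to preserve the diffusion term in the limit while killing the transport term. As a preliminary step I would pass to the moving frame by setting $\tilde v(t,y) = v(t, y + ct)$; this is a spatially $1$-periodic e.s. of the purely degenerate parabolic equation
$$
\tilde v_t - g(\tilde v)_{yy} = 0,
$$
with the same essential sup norm and the same mean value $\int_\T \tilde v(t,y)\,dy = I$.

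I would then consider the parabolically rescaled sequence $\tilde v_n(t,y) = \tilde v(n^2 t, ny)$, which by scale invariance is again an e.s. of the same equation. Passing to a subsequence, assume $\tilde v_n$ converges in the Tartar sense (\ref{4}) to a measure-valued function $\nu_{t,y} \in \MV(\Pi)$. Applying Lemma~\ref{lem2} with $\gamma_n = n^2$ gives $\tilde v_n \rightharpoonup I$ weakly-$*$ in $L^\infty(\Pi)$, hence $\int \lambda\, d\nu_{t,y}(\lambda) = I$ for a.e. $(t,y)$, and in particular $I \in \Co\supp\nu_{t,y}$. Theorem~\ref{th1}, applied to the trivial approximating data $\varphi_n \equiv 0$, $g_n \equiv g$, forces $g$ to be constant on $\Co\supp\nu_{t,y}$. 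Since by hypothesis $g$ is not constant in any vicinity of $I$, this collapses $\Co\supp\nu_{t,y}$ to $\{I\}$, so $\nu_{t,y} = \delta(\lambda - I)$. The regularity criterion of Theorem~\ref{thT} then gives $\tilde v_n \to I$ in $L^1_{loc}(\Pi)$.

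To promote this $L^1_{loc}$ convergence to full $L^1(\T)$ decay, I would run the endgame of the proof of Proposition~\ref{pro1}: extract a further subsequence $n_k$ for which $\tilde v_{n_k}(t,\cdot) \to I$ in $L^1(\T)$ for a.e. $t > 0$, then pick such a time $t_0$ with the additional property that each $n_k^2 t_0$ lies in the full-measure set where the contraction (\ref{contr}) applies. Using the $1$-periodicity of $\tilde v(s,\cdot)$ to rewrite
$$
\int_\T |\tilde v_{n_k}(t_0,y) - I|\,dy = \int_\T |\tilde v(n_k^2 t_0, n_k y) - I|\,dy = \int_\T |\tilde v(n_k^2 t_0, y) - I|\,dy,
$$
and the change of variable $x = y + c\,n_k^2 t_0$ to identify this with $\int_\T |v(n_k^2 t_0, x) - I|\,dx$, I then apply the contraction property (\ref{contr}) to $v$ and the constant e.s. $\equiv I$: for a.e. $t > n_k^2 t_0$,
$$
\int_\T |v(t,x) - I|\,dx \le \int_\T |v(n_k^2 t_0, x) - I|\,dx,
$$
and letting $k\to\infty$ yields the required $\esslim_{t\to+\infty}\int_\T |v(t,x)-I|\,dx = 0$.

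The step I expect to require the most care is the preliminary reduction to the moving-frame equation: one must verify that the translate $\tilde v(t,y) = v(t, y+ct)$ genuinely inherits the entropy inequalities of Definition~\ref{def1}, for every $k\in\R$, of the purely diffusive equation $\tilde v_t - g(\tilde v)_{yy} = 0$, using the affinity of $\varphi(u) - cu$ on $[a,b]$ together with the a priori bound $a \le v(t,x) \le b$ granted by the comparison principle. Once this is cleanly established, the remaining steps are a straightforward parabolic rerun of the arguments already worked out for Proposition~\ref{pro1}.
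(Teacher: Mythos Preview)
Your proposal is correct and follows essentially the same route as the paper: reduce to the moving frame (the paper writes $v(t,x)=w(t,x-ct)$ with $w$ an e.s.\ of $w_t-g(w)_{xx}=0$, which is your $\tilde v$), apply the parabolic rescaling $w(n^2t,nx)$, and rerun the compensated-compactness/contraction argument of Proposition~\ref{pro1}. The only differences are cosmetic: the paper dismisses the moving-frame reduction with ``Evidently'' rather than flagging it as the delicate step, and in the endgame the paper stays with $w$ throughout and only at the very last line uses $\int_\T|v(t,x)-I|\,dx=\int_\T|w(t,x)-I|\,dx$, whereas you translate back to $v$ a step earlier.
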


\begin{proof}
Evidently, $v(t,x)=w(t,x-ct)$, where $w(t,x)$ is an e.s. to the Cauchy problem for the equation
\begin{equation}\label{8}
w_t-g(w)_{xx}=0
\end{equation}
with initial data $v_0(x)$.
The sequence $w_n=w(n^2t,nx)$, $n\in\N$, consists of $x$-periodic e.s. of the same equation. Without loss of generality we assume that some subsequence $w_{n_k}$ converges to a measure valued function $\nu_{t,x}\in \MV(\Pi)$ in the sense of (\ref{4}).
Applying Lemma~\ref{lem2} and relation (\ref{4}) (with $p(\lambda)=\lambda$), we find that identity (\ref{5}) holds. By Theorem~\ref{th1} $g(u)=\const$ on $[a(t,x),b(t,x)]=\Co\supp\nu_{t,x}$ for a.e. $(t,x)\in\Pi$. In view of (\ref{5}) $I\in (a(t,x),b(t,x))$ whenever $a(t,x)<b(t,x)$. By our assumption $g(u)$ cannot be constant on such the intervals. Thus, $a(t,x)=b(t,x)=I$ for a.e. $(t,x)\in\Pi$ and $\nu_{t,x}(\lambda)=\delta(\lambda-I)$. By Theorem~\ref{thT}
$w_{n_k}\to I$ as $k\to\infty$ in $L^1_{loc}(\R_+\times\T)$. In the same way as in the proof of Proposition~\ref{pro1}, we derive from this property that
$$
\esslim_{t\to+\infty}\int_\T |w(t,x)-I|dx=0.
$$
To complete the proof, we notice that
$$
\int_\T |v(t,x)-I|dx=\int_\T |w(t,x-ct)-I|dx=\int_\T |w(t,x)-I|dx.
$$
\end{proof}

Let $[a',b']$ be the maximal interval such that $a\le a'<I<b'\le b$ and that $g(\lambda)$ is constant on $[a',b']$. If such the intervals do not exist (that is, either $a=b=I$ or $g(\lambda)$ is not constant in any vicinity of $I$), we set $a'=b'=I$.

\begin{corollary}\label{cor3}
Under the above notations
$$
\esslim_{t\to+\infty}\int_\T |v(t,x)-s_{a',b'}(v(t,x))|dx=0.
$$
\end{corollary}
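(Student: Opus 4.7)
The plan is to mimic the proof of Corollary~\ref{cor2}, replacing Proposition~\ref{pro1} with Proposition~\ref{pro2} and the affine property of $\varphi$ with the constancy property of $g$. Namely, I will dominate $v$ from above by an e.s.\ $\tilde v$ of the reduced equation $v_t+cv_x-g(v)_{xx}=0$ whose profile converges to $b'$, and symmetrically from below by one converging to $a'$.

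\textbf{Upper bound.} Assume first that $b'<b$. By the maximality of $[a',b']$ the function $g$ is not constant in any vicinity of $b'$: otherwise one could slightly enlarge $[a',b']$ to the right while staying inside $[a,b]$, contradicting maximality. I next construct initial data $\tilde v_0\in L^\infty(\T)$ satisfying $\tilde v_0\ge v_0$ a.e., $a\le\tilde v_0\le b$, and $\int_\T\tilde v_0\,dx=b'$. A direct construction works: set $\tilde v_0(x)=\min(v_0(x)+C,b)$ and observe that as $C$ increases from $0$ to $b-a$, the mean $\int_\T\tilde v_0\,dx$ is continuous and monotone, running from $I$ to $b$, so some $C^{*}\in[0,b-a]$ produces mean exactly $b'\in[I,b]$. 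Let $\tilde v$ be the e.s.\ of (\ref{1}), (\ref{2}) with initial datum $\tilde v_0$. By the comparison principle $\tilde v\ge v$ and $a\le\tilde v(t,x)\le b$, so $\tilde v$ satisfies the reduced equation. Applying Proposition~\ref{pro2} with mean $b'$ yields $\tilde v(t,\cdot)\to b'$ in $L^1(\T)$, and therefore
\[\int_\T (v(t,x)-b')^+\,dx\le\int_\T|\tilde v(t,x)-b'|\,dx\to 0.\]

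\textbf{Lower bound and conclusion.} Symmetrically, if $a'>a$, the maximality of $[a',b']$ gives that $g$ is not constant in any vicinity of $a'$, and the construction $\hat v_0(x)=\max(v_0(x)-C,a)$, with $C$ chosen so that $\int_\T\hat v_0\,dx=a'$, produces an e.s.\ $\hat v\le v$, $a\le\hat v\le b$, with $\hat v(t,\cdot)\to a'$ in $L^1(\T)$; hence $\int_\T(a'-v(t,x))^+\,dx\to 0$. The boundary cases $b'=b$ and $a'=a$ are trivial since then $(v-b')^+\equiv 0$ or $(a'-v)^+\equiv 0$. Combining both one-sided decays through the identity $|v-s_{a',b'}(v)|=(v-b')^+ + (a'-v)^+$ completes the proof.

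\textbf{Main obstacle.} The critical technical step is producing $\tilde v_0$ (and $\hat v_0$) that simultaneously dominates $v_0$ pointwise, has the prescribed mean, and remains inside $[a,b]$, since only then does the reduced equation hold for the auxiliary e.s.\ and Proposition~\ref{pro2} become applicable. A naive translation $v_0+(b'-I)$ could exceed $b$ and fail the last requirement; the truncated construction $\min(v_0+C,b)$ circumvents this neatly while preserving both pointwise dominance and the correct mean.
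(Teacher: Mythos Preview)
Your proof is correct and follows essentially the same approach as the paper's: dominate $v$ above and below by auxiliary e.s.\ with means $b'$ and $a'$ and apply Proposition~\ref{pro2}. The only difference is cosmetic --- the paper constructs the upper initial datum as $\max(l,v_0)$ with $l$ chosen to give mean $b'$, whereas you use $\min(v_0+C,b)$ --- and both constructions achieve the same three required properties (pointwise domination, range contained in $[a,b]$, prescribed mean).
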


\begin{proof}
The proof is similar to the proof of Corollary~\ref{cor2}.
Assuming that $b'<b$ we can choose a constant $l\in [a,b]$ such that
$\displaystyle I(l)=\int_\T \max(l,v_0(x))dx=b'$. This follows from continuity of $I(l)$ and the relations $I(a)=I\le b'$, $I(b)=b>b'$. Let $w(t,x)$ be an e.s. of (\ref{1}), (\ref{2}) with initial function $w_0(x)=\max(l,v_0(x))\ge v_0(x)$. By the comparison principle $w(t,x)\ge v(t,x)$. Observe that $a\le w_0(x)\le b$, $\displaystyle\int_\T w_0(x)dx=b'$, and $g(u)$ is not constant in any vicinity of $b'$ (otherwise, we can enlarge the end $b'$ of the maximal interval $[a',b']$, which is impossible). By Proposition~\ref{pro2}
$$
\esslim_{t\to+\infty}\int_\T|w(t,x)-b'|dx=0.
$$
Since $(v-b')^+\le (w-b')^+\le |w-b'|$, we conclude that
\begin{equation}\label{6p}
\esslim_{t\to+\infty}\int_\T(v(t,x)-b')^+dx=0.
\end{equation}
Obviously, (\ref{6p}) is satisfied also for $b'=b$.
Analogously, we prove that
\begin{equation}\label{7p}
\esslim_{t\to+\infty}\int_\T(a'-v(t,x))^+dx=0.
\end{equation}
For that we compare $v(t,x)$ and e.s. $w(t,x)$ of (\ref{1}), (\ref{2}) with initial function $w_0(x)=\min(l,v_0(x))\le v_0(x)$, where $l\in [a,b]$ is chosen to satisfy
the equality $\displaystyle\int_\T w_0(x)dx=a'$.

The desired statement readily follows from relations (\ref{6p}), (\ref{7p}). The proof is complete.
\end{proof}

Notice that the segment $[a',b']$ is the maximal segment such that $-M\le a'<I<b'\le M$ and that the functions $\varphi(u)-cu$, $g(u)$ are constant on $[a',b']$ for some $c\in\R$ (and $a'=b'=I$ if such segments do not exist).

Let $E\subset\R_+$ be the set of full measure defined in the proof of Proposition~\ref{pro1} above. Recall that this set consists of common Lebesgue points of the functions 
$$t\to\int_\T p(u(t,x))f(x)dx, \quad p(\lambda)\in C([-M,M]), \ f(x)\in L^1(\T).$$

\begin{lemma}\label{addlem}
Assume that $u(t,x)$ is an e.s. of (\ref{1}), (\ref{2}) with a periodic initial function and $t_0\in E$. Then
$$
u(t,x)\to u(t_0,x) \ \mbox{ in } L^1(\T)
$$
as $t\to t_0+$, $t\in E$.
\end{lemma}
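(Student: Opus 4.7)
The plan is to reduce the lemma to the initial condition (\ref{3}) applied to the time-shifted function $v(s,x):=u(t_0+s,x)$ for $(s,x)\in\Pi$. This $v$ is bounded and measurable, has $g(v)_x\in L^2_{loc}(\Pi)$, and satisfies the entropy inequality (\ref{entr}) in $\D'(\Pi)$ (obtained by translating test functions in time by $t_0$). By the existence theory for e.s.\ in \cite{MalT}, $v$ admits an initial trace $\hat v_0\in L^\infty(\T)$ with $\esslim_{s\to 0+}v(s,\cdot)=\hat v_0$ in $L^1_{loc}(\R)$.

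The crucial step is to identify $\hat v_0=u(t_0,\cdot)$, and this is where the hypothesis $t_0\in E$ is used. For any $f\in L^1(\T)$, since $t_0$ is a Lebesgue point of $\tau\mapsto\int_\T u(\tau,x)f(x)\,dx$,
\begin{equation*}
\int_\T u(t_0,x)f(x)\,dx=\lim_{h\to 0+}\frac{1}{h}\int_{t_0}^{t_0+h}\int_\T u(\tau,x)f(x)\,dx\,d\tau=\lim_{h\to 0+}\frac{1}{h}\int_0^h\int_\T v(s,x)f(x)\,dx\,ds.
\end{equation*}
The essential limit $v(s,\cdot)\to\hat v_0$ in $L^1(\T)$ (from $L^1_{loc}(\R)$ by periodicity), combined with Ces\`aro averaging applied to the bounded measurable function $s\mapsto\int_\T v(s,x)f(x)\,dx$, forces the last limit to equal $\int_\T\hat v_0(x)f(x)\,dx$. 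Since $f\in L^1(\T)$ is arbitrary, one obtains $\hat v_0=u(t_0,\cdot)$ a.e.\ on $\T$.

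With the initial data identified as $u(t_0,\cdot)$, the initial condition (\ref{3}) for $v$ yields $\esslim_{t\to t_0+}u(t,\cdot)=u(t_0,\cdot)$ in $L^1(\T)$; that is, $u(t,\cdot)\to u(t_0,\cdot)$ in $L^1(\T)$ as $t\to t_0+$ along a set of full measure in $(t_0,\infty)$. Intersecting this set with the full-measure set $E$ gives the claimed convergence along $E$.

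The main obstacle is the identification $\hat v_0=u(t_0,\cdot)$ in the second step; without the Lebesgue-point hypothesis $t_0\in E$, the abstractly defined initial trace of $v$ and the specific pointwise representative $u(t_0,\cdot)$ need not agree. Once this alignment is secured via the Ces\`aro argument above, the conclusion follows at once from the initial condition (\ref{3}) applied to the translated entropy solution $v$.
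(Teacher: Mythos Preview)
Your approach differs from the paper's and has a genuine gap at the step where you assert that $v(s,x)=u(t_0+s,x)$ ``admits an initial trace $\hat v_0\in L^\infty(\T)$ with $\esslim_{s\to 0+}v(s,\cdot)=\hat v_0$ in $L^1_{loc}(\R)$'' by appeal to \cite{MalT}. That reference concerns well-posedness given initial data; it does not supply a strong-trace theorem asserting that an arbitrary bounded function satisfying the entropy inequalities in the open half-plane possesses a strong $L^1$ initial trace. The existence of such a strong trace is precisely the nontrivial content of the lemma: once you know $\esslim_{s\to 0+}v(s,\cdot)$ exists in $L^1(\T)$, the identification with $u(t_0,\cdot)$ via your Ces\`aro argument is routine. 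So as written, the proposal assumes what has to be proved. (Strong-trace results of the type you need do exist for scalar conservation laws and some degenerate parabolic settings, but they are substantial theorems in their own right, are not among the paper's references, and certainly do not follow from ``existence theory.'')

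The paper's proof is self-contained and takes a different route. From the $L^1$-contraction against constants it gets $\int_\T|u(t,x)-k|\,dx\le\int_\T|u(t_0,x)-k|\,dx$ for $t>t_0$, $t,t_0\in E$; integrating in $k$ over $[-M,M]$ yields the $L^2$-norm inequality $\int_\T u(t,x)^2\,dx\le\int_\T u(t_0,x)^2\,dx$. Separately, testing the equation with $f\in C_0^\infty(\R)$ shows $t\mapsto\int_\R u(t,x)f(x)\,dx$ is Lipschitz, hence $u(t,\cdot)\rightharpoonup u(t_0,\cdot)$ weakly-$*$ as $E\ni t\to t_0$. Weak convergence together with the $L^2$-norm upper bound forces strong $L^2(\T)$ convergence, and hence $L^1(\T)$ convergence. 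This argument never needs to postulate a trace for the shifted function; it manufactures the strong convergence directly from contraction and weak continuity.
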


\begin{proof}
If $t,t_0\in E$ then these points are Lebesgue points of the functions $\displaystyle\int_\T |u(t,x)-k|dx$ for all $k\in\R$, and it follows from (\ref{contr}) with $u_1=u(t,x)$, $u_2\equiv k$ that for each $E\ni t>t_0$
\begin{equation}\label{al1}
\int_\T |u(t,x)-k|dx\le\int_\T |u(t_0,x)-k|dx.
\end{equation}
Let $M=\|u\|_\infty$. Integrating (\ref{al1}) with respect to $k\in [-M,M]$ and taking into account that for $|\lambda|\le M$
$$
\int_{-M}^M |\lambda-k|dk=\lambda^2+M^2,
$$
we obtain that for all $t\in E$, $t>t_0$
\begin{equation}\label{al2}
\int_\T (u(t,x))^2 dx\le\int_\T (u(t_0,x))^2 dx.
\end{equation}
Further, applying (\ref{1}) to a test function $f(x)\in C_0^\infty(\R)$, we obtain that
$$
\frac{d}{dt}\int_\R u(t,x)f(x)dx=\int_\R [\varphi(u(t,x))f'(x)+g(u(t,x))f''(x)]dx.
$$
Since the right-hand side of this equality is bounded, we see that the function $\displaystyle\int_\R u(t,x)f(x)dx$ is Lipshitz continuous with respect to $t$. In particular, this function is continuous on $E$. This implies that for every $f(x)\in C_0^\infty(\R)$
$$
\lim_{E\ni t\to t_0}\int_\R u(t,x)f(x)dx=\int_\R u(t_0,x)f(x)dx.
$$
Notice that the family $u(t,\cdot)$, $t\in E$, is bounded in $L^\infty(\R)$ and it follows from the above relation that $u(t,\cdot)\rightharpoonup u(t_0,\cdot)$ weakly-$*$ in $L^\infty(\R)$ (and also in $L^\infty(\T)$, by the periodicity)  as $E\ni t\to t_0$. In particular,
\begin{equation}\label{al3}
\lim_{E\ni t\to t_0}\int_\T u(t,x)u(t_0,x)dx=\int_\T (u(t_0,x))^2dx.
\end{equation}
By (\ref{al2}), (\ref{al3})
\begin{eqnarray*}
\limsup_{E\ni t\to t_0+}\int_\T (u(t,x)-u(t_0,x))^2dx=\limsup_{E\ni t\to t_0+}\int_\T \{(u(t,x))^2dx-2u(t,x)u(t_0,x)+(u(t_0,x))^2\}dx\le\\
2\int_\T (u(t_0,x))^2dx-2\lim_{E\ni t\to t_0}\int_\T u(t,x)u(t_0,x)dx=0.
\end{eqnarray*}
Hence, $u(t,x)\to u(t_0,x)$ in $L^2(\T)$ as $E\ni t\to t_0+$. In view of continuity of the embedding $L^2(\T)\subset L^1(\T)$, this completes the proof.
\end{proof}

From Propositions~\ref{pro1},~\ref{pro2} we derive the following result.

\begin{theorem}\label{th2}
$$
\esslim_{t\to+\infty}\int_\T |u(t,x)-s_{a',b'}(u(t,x))|dx=0.
$$
\end{theorem}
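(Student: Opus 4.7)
The plan is to combine Corollary~\ref{cor2} (which controls how much $u(t,\cdot)$ overshoots the segment $[a,b]$ in $L^1$) with Corollary~\ref{cor3} (which handles the further cutoff to $[a',b']$ for e.s.\ whose initial datum lies in $[a,b]$). Because $u(t,\cdot)$ is not itself valued in $[a,b]$, the two statements cannot be chained directly; the bridge is Lemma~\ref{lem3}, which replaces a periodic function of mean $I$ by one with the same mean and valued in $[a,b]$ at the cost of a factor~$2$ in $L^1$. After this replacement, the $L^1$-contraction (\ref{contr}) transfers control between the two e.s.

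Concretely, fix $\varepsilon>0$. By Corollary~\ref{cor2} I pick $t_0\in E$ with
$$
\int_\T |u(t_0,x)-s_{a,b}(u(t_0,x))|\,dx<\varepsilon/8.
$$
Since $s_{a,b}(u(t_0,\cdot))$ takes values in $[a,b]$ and $u(t_0,\cdot)$ has mean $I$ by (\ref{mass}), Lemma~\ref{lem3} yields $w_0\in L^\infty(\T)$ with $a\le w_0\le b$, $\int_\T w_0\,dx=I$, and $\|u(t_0,\cdot)-w_0\|_{L^1(\T)}\le\varepsilon/4$. Let $v(t,x)$ be the e.s.\ of (\ref{1}), (\ref{2}) with initial datum $w_0$; the comparison principle gives $a\le v\le b$, so Corollary~\ref{cor3} provides $T>0$ with $\int_\T |v(t,x)-s_{a',b'}(v(t,x))|\,dx<\varepsilon/2$ for a.e.\ $t>T$.

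Since the equation is autonomous and $t_0\in E$, Lemma~\ref{addlem} ensures that $(t,x)\mapsto u(t+t_0,x)$ satisfies the initial condition (\ref{3}) with datum $u(t_0,\cdot)$; by uniqueness it therefore is the e.s.\ with this datum, and (\ref{contr}) gives $\int_\T |u(t+t_0,x)-v(t,x)|\,dx\le\varepsilon/4$ for a.e.\ $t>0$. Using that $s_{a',b'}$ is $1$-Lipschitz, the triangle inequality produces
$$
\int_\T |u(t+t_0,x)-s_{a',b'}(u(t+t_0,x))|\,dx \le 2\int_\T |u(t+t_0,x)-v(t,x)|\,dx + \int_\T |v(t,x)-s_{a',b'}(v(t,x))|\,dx,
$$
whose right-hand side is below $\varepsilon/2+\varepsilon/2=\varepsilon$ for a.e.\ $t>T$. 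Since $\varepsilon$ was arbitrary, the theorem follows.

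The main obstacle I expect is the identification of $u(\cdot+t_0,\cdot)$ with the e.s.\ whose initial datum is $u(t_0,\cdot)$: this requires $t_0$ to be a point at which $u$ has a genuine $L^1$-trace that matches the value quoted when invoking Corollary~\ref{cor2}, which is precisely why Lemma~\ref{addlem} and the set $E$ of common Lebesgue points were prepared in advance. Once that identification is made, the remainder reduces to assembling the pieces via the triangle inequality and the $L^1$-contraction.
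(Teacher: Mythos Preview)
Your proposal is correct and follows essentially the same route as the paper's proof: both pick a large time $t_0$ (or a sequence $t_k$) in $E$ where Corollary~\ref{cor2} makes $u(t_0,\cdot)$ close to $s_{a,b}(u(t_0,\cdot))$, invoke Lemma~\ref{lem3} to replace it by a function valued in $[a,b]$ with mean $I$, use Lemma~\ref{addlem} to identify $u$ as the e.s.\ restarted from time $t_0$, and then combine the $L^1$-contraction (\ref{contr}) with Corollary~\ref{cor3} and the $1$-Lipschitz bound for $s_{a',b'}$ via the triangle inequality. The only cosmetic difference is that you run a single $\varepsilon$--$T$ argument whereas the paper passes through a sequence $t_k\to\infty$ and takes the limit of $4\varepsilon_k$.
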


\begin{proof}
Let $E$ be the set of full measure defined above in the proof of Proposition~\ref{pro1}. As easily follows from Corollary~\ref{cor2},
$$\int_\T |u(t,x)-s_{a,b}(u(t,x))|dx\to 0$$ as $t\to+\infty$, $t\in E$. Let $t_k\in E$, $k\in\N$, be a strictly increasing sequence such that $t_k\to+\infty$ as $k\to\infty$. Then $$\varepsilon_k\doteq\int_\T |u(t_k,x)-s_{a,b}(u(t_k,x))|dx\to 0 \ \mbox{ as } k\to\infty.$$
By Lemma~\ref{lem3} we can find functions $v_{0k}\in L^\infty(\T)$ with the properties $a\le v_{0k}\le b$, $\displaystyle\int v_{0k}(x)dx=I$, and $\|u(t_k,\cdot)-v_{0k}\|_{L^1(\T)}\le 2\varepsilon_k$. Let $u=v_k=v_k(t,x)$ be an e.s. of (\ref{1}), (\ref{2}) in the half-plane $t>t_k$ with initial data $u(t_k,x)=v_{0k}(x)$. As easily follows from the requirement $t_k\in E$ and Lemma~\ref{addlem}, $u=u(t,x)$ is an e.s. of (\ref{1}), (\ref{2}) in the same half-plane $t>t_k$ with initial data
$u(t_k,x)$. By the $L^1(\T)$-contraction property, for a.e. $t>t_k$
\begin{equation}\label{9}
\int_\T |u(t,x)-v_k(t,x)|dx\le\int_\T |u(t_k,x)-v_{0k}(x)|dx\le 2\varepsilon_k.
\end{equation}
On the other hand, by Corollary~\ref{cor3}
\begin{equation}\label{10}
\esslim_{t\to+\infty}\int_{\T} |v_k(t,x)-s_{a',b'}(v_k(t,x))|dx=0.
\end{equation}
Since
\begin{eqnarray*}
|u-s_{a',b'}(u)|\le |u-v_k|+|v_k-s_{a',b'}(v_k)|+|s_{a',b'}(v_k)-s_{a',b'}(u)|\le \\
|u-v_k|+|v_k-s_{a',b'}(v_k)|+|v_k-u|=2|u-v_k|+|v_k-s_{a',b'}(v_k)|,
\end{eqnarray*}
it follows from (\ref{9}), (\ref{10}) that for all $k\in\N$
$$
\esslimsup_{t\to+\infty}\int_\T |u(t,x)-s_{a',b'}(u(t,x))|dx\le 4\varepsilon_k.
$$
Passing to the limit as $k\to\infty$, we obtain
$$
\esslimsup_{t\to+\infty}\int_\T |u(t,x)-s_{a',b'}(u(t,x))|dx=0,
$$
as was to be proved.
\end{proof}

Now we are ready to prove our main result.

\begin{proof}[Proof of Theorem~\ref{thM}]
Let $t_k\in E$ be the same sequence as in the proof of previous Theorem~\ref{th2}.
We set $v_{0k}(x)=s_{a',b'}(u(t_k,x))$. Then
$$
\varepsilon_k\doteq\|u(t_k,\cdot)-v_{0k}\|_{L^1(\T)}\to 0 \ \mbox{ as } k\to\infty.
$$
Observe that $\varphi(u)-cu=\const$, $g(u)=\const$ for $u\in [a',b']$ and (\ref{1}) reduces to the simple equation $u_t+cu_x=0$. Therefore, the unique e.s. of the Cauchy problem for (\ref{1}) in the half-plane $t>t_k$ satisfying the condition $u(t_k,\cdot)=v_{0k}$ has the form $u=v_k(x-ct)$, where $v_k=v_{0k}(x+ct_k)$.
Since $u=u(t,x)$ is an e.s. of the same problem for equation (\ref{1}) with the Cauchy data $u(t_k,x)$, then by the contraction property (\ref{contr}) for a.e. $t>t_k$
\begin{equation}\label{11}
\int_\T |u(t,x)-v_k(x-ct)|dx\le\int_\T |u(t_k,x)-v_{0k}(x)|dx\le\varepsilon_k.
\end{equation}
As follows from (\ref{11}) and the translation invariance of the measure $dx$ on $\T$, for all $l,k\in\N$, $l>k$
\begin{eqnarray*}
\int_\T |v_l(x)-v_k(x)|dx=\int_\T |v_l(x-ct)-v_k(x-ct)|dx\le \int_\T |u(t,x)-v_k(x-ct)|dx+ \\ \int_\T |u(t,x)-v_l(x-ct)|dx\le \varepsilon_k+\varepsilon_l\le 2\sup_{l\ge k}\varepsilon_l\mathop{\to}_{k\to\infty} 0.
\end{eqnarray*}
This means that $v_k$, $k\in\N$, is a Cauchy sequence in $L^1(\T)$. By the completeness of $L^1(\T)$ this sequence converges to a function $v(y)$ in $L^1(\T)$. Since $a'\le v_k\le b'$, then $a'\le v\le b'$ as well. In particular, the functions $\varphi(u)-cu$, $g(u)$ are constant on $[\alpha(v),\beta(v)]$. Further, in view of (\ref{11}) for a.e. $t>t_k$
\begin{eqnarray*}
\int_\T |u(t,x)-v(x-ct)|dx\le\int_\T |u(t,x)-v_k(x-ct)|dx+\int_\T|v_k(y)-v(y)|dy\le \\ \varepsilon_k+\|v_k-v\|_{L^1(\T)}\mathop{\to}_{k\to\infty} 0.
\end{eqnarray*}
Evidently, this implies the desired asymptotic property (\ref{ass}). From this property it follows that
$$
\int_\T v(y)dy=\int_\T v(x-ct)dx=\esslim_{t\to+\infty}\int_\T u(t,x)dx=I.
$$
The proof is complete.
\end{proof}

Let us introduce the nonlinear operator $T$ on $L^\infty(\T)$, which associates an initial function $u_0$ with the profile $v(y)=T(u_0)(y)$ of the limit traveling wave defined by (\ref{ass}).

In the conclusion, we prove that the operator $T$ does not increase the $L^1$-distance (for conservation laws this result is proved in \cite[Theorem 4.1]{PaMZM}).

\begin{theorem}\label{th3}
Let $u_{01},u_{02}\in L^\infty(\T)$, and $v_1=T(u_{01})(y)$, $v_2=T(u_{02})(y)$. Then
\begin{equation}\label{12}
\int_\T |v_1(y)-v_2(y)|dx\le\int_\T |u_{01}(x)-u_{02}(x)|dx.
\end{equation}
\end{theorem}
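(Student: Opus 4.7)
The plan is to combine the $L^1$-contraction \eqref{contr} for the e.s.\ pair $(u_1,u_2)$ having initial data $(u_{01},u_{02})$ with the asymptotic convergence \eqref{ass} from Theorem~\ref{thM}, so as to transfer the contraction estimate from the solutions to their profiles. Let $c_j\in\R$ be the speed associated with $v_j=T(u_{0j})$ by Theorem~\ref{thM}, and set
$$
G(t)=\int_\T |u_1(t,x)-u_2(t,x)|\,dx.
$$
By \eqref{contr} applied to $u_1,u_2$, the function $G$ is essentially nonincreasing, so $L:=\esslim_{t\to+\infty}G(t)$ exists and $L\le G(0)=\|u_{01}-u_{02}\|_{L^1(\T)}$.

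Next I would introduce the translation function $F\colon\T\to\R$,
$$
F(s)=\int_\T |v_1(y)-v_2(y+s)|\,dy,
$$
which is continuous by the standard $L^1$-continuity of translation on $\T$. The change of variable $y=x-c_1 t$ together with periodicity gives
$$
\int_\T |v_1(x-c_1 t)-v_2(x-c_2 t)|\,dx=F\bigl((c_1-c_2)t\bmod 1\bigr).
$$
Combining this with the triangle inequality and \eqref{ass},
$$
|G(t)-F((c_1-c_2)t)|\le\sum_{j=1}^{2}\int_\T |u_j(t,x)-v_j(x-c_j t)|\,dx\longrightarrow 0
$$
on a set $\mathcal{E}\subset\R_+$ of full measure. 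Hence $F((c_1-c_2)t)\to L$ as $\mathcal{E}\ni t\to+\infty$.

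If $c_1=c_2$, the left-hand side is already the constant $F(0)=\|v_1-v_2\|_{L^1(\T)}$, and \eqref{12} is immediate. If $c_1\ne c_2$, the crucial step is to conclude $F\equiv L$ on $\T$, which I would obtain by a density argument: for any $s_0\in\T$, any large $T$, and any $\varepsilon>0$, the set $\{t\in[T,T+2/|c_1-c_2|]\colon (c_1-c_2)t\bmod 1\in(s_0-\varepsilon,s_0+\varepsilon)\}$ has positive Lebesgue measure, so it meets the full-measure set $\mathcal{E}$; extracting $t_n\in\mathcal{E}$ with $t_n\to+\infty$ and $(c_1-c_2)t_n\bmod 1\to s_0$ and using continuity of $F$ yields $F(s_0)=L$. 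Taking $s_0=0$ then gives $\|v_1-v_2\|_{L^1(\T)}=L\le\|u_{01}-u_{02}\|_{L^1(\T)}$, which is \eqref{12}.

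The main technical obstacle is the bookkeeping with essential limits: both \eqref{contr} and \eqref{ass} hold on sets of full measure in $t$ rather than pointwise, so the density argument in the case $c_1\ne c_2$ must be executed along such a set. As a structural sanity check, Theorem~\ref{thM} forces $\varphi$ to be affine with slope $c_j$ on $[\alpha(v_j),\beta(v_j)]$, so if $c_1\ne c_2$ the essential ranges of $v_1$ and $v_2$ can overlap in at most one point; one of the profiles then dominates the other pointwise and $F$ is literally constant equal to $|I_1-I_2|$ on $\T$, which is consistent with the density conclusion and already implies \eqref{12} directly via $|I_1-I_2|\le\|u_{01}-u_{02}\|_{L^1(\T)}$.
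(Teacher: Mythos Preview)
Your proof is correct. For $c_1=c_2$ it coincides with the paper's argument. For $c_1\neq c_2$, however, the paper takes as its \emph{main} route precisely what you relegate to a final sanity check: by Theorem~\ref{thM} the function $\varphi$ is affine with slope $c_j$ on $[\alpha(v_j),\beta(v_j)]$, so the open intervals $(\alpha(v_1),\beta(v_1))$ and $(\alpha(v_2),\beta(v_2))$ cannot intersect, one profile dominates the other, and $\int_\T|v_1-v_2|\,dy=|I_1-I_2|\le\|u_{01}-u_{02}\|_{L^1(\T)}$ follows in one line. Your equidistribution argument showing that $F\equiv L$ on $\T$ is a genuine alternative: it does not exploit the structural rigidity of the limit profiles and would transfer to situations where such rigidity is unavailable. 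The paper's approach is shorter in this setting and yields the extra information that $\|v_1-v_2\|_{L^1(\T)}$ equals $|I_1-I_2|$ exactly when $c_1\neq c_2$.
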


\begin{proof}
Let $u_1(t,x)$, $u_2(t,x)$ be e.s. of (\ref{1}), (\ref{2}) with initial functions
$u_{01}(x)$, $u_{02}(x)$, respectively. By (\ref{ass})
$$
\esslim_{t\to+\infty}
(u_1(t,x)-v_1(x-c_1t))=\esslim_{t\to+\infty} (u_2(t,x)-v_2(x-c_2t))=0 \ \mbox{ in } L^1(\T).
$$
If $c_1=c_2=c$, then with the help of (\ref{contr}) we find
\begin{eqnarray*}
\int_\T |v_1(y)-v_2(y)|dy=\int_\T |v_1(x-ct)-v_2(x-ct)|dy\le \\ \int_\T |u_1(t,x)-u_2(t,x)|dx+\int_\T|u_1(t,x)-v_1(x-ct)|dx+\int_\T|u_2(t,x)-v_2(x-ct)|dx \\ \le \int_\T |u_{01}(x)-u_{02}(x)|dx+\int_\T|u_1(t,x)-v_1(x-ct)|dx+\int_\T|u_2(t,x)-v_2(x-ct)|dx.
\end{eqnarray*}
In the essential limit as $t\to+\infty$ this implies (\ref{12}).

In the case when $c_1\not=c_2$ Theorem~\ref{thM} implies that the intervals $(\alpha(v_1),\beta(v_1))$, $(\alpha(v_2),\beta(v_2))$ cannot intersect. Therefore,
\begin{eqnarray*}
\int_\T |v_1(y)-v_2(y)|dy=\left|\int_\T v_1(y)dy-\int_T v_2(y)dy\right|=\\
\left|\int_\T u_{01}(x)dx-\int_\T u_{02}(x)dx\right|\le \int_\T |u_{01}(x)-u_{02}(x)|dx,
\end{eqnarray*}
and (\ref{12}) follows.
\end{proof}

\section{Acknowledgements}
This work was supported by the Ministry of Education and Science of the Russian  Federation (project no. 1.445.2016/1.4) and by the Russian Foundation for Basic Research (grant 18-01-00258-a.)

\end{document}